\def\commentout#1{{}}
\def\Z{{\mathbb Z}}
\def\N{{\mathbb N}}
\def\cC{{\mathcal C}}
\def\cD{{\mathcal D}}
\def\cE{{\mathcal E}}
\newcommand\Hom{\operatorname{Hom}}
\newcommand\FinSets{{\operatorname{FinSets}}}
\newcommand\id{{\operatorname{id}}}
\newcommand\Dec{{{\operatorname{Dec}}}}
\newtheorem{theorem}{Theorem}[section]
\newtheorem{lemma}[theorem]{Lemma}
\newtheorem{proposition}[theorem]{Proposition}
\newtheorem{corollary}[theorem]{Corollary}
\theoremstyle{definition}
\newtheorem{definition}[theorem]{Definition}
\theoremstyle{remark}
\numberwithin{equation}{section}
\begin{document}
\title[Locally-finite extensive categories and their semi-rings]
{Locally-finite extensive categories, their semi-rings, 
and decomposition to connected objects}

\author{Shoma Fujino}
\address{Mathematics Program \\ 
Graduate School of Advanced Science and Engineering\\
Hiroshima University, 739-8526 Japan}
\email{shomafujino0729@gmail.com}

\author{Makoto Matsumoto}
\address{Mathematics Program \\ 
Graduate School of Advanced Science and Engineering\\
Hiroshima University, 739-8526 Japan}
\email{m-mat@math.sci.hiroshima-u.ac.jp}

\keywords{Hom functor, counting, semi-ring, rig, locally finite category,
extensive category, Burnside ring, Grothendieck group
}
\thanks{
The second author is partially supported by JSPS
Grants-in-Aid for Scientific Research
JP26310211 and JP18K03213.
}

\subjclass[2020]{
05C60 Isomorphism problems in graph theory 
(reconstruction conjecture, etc.) and homomorphisms (subgraph embedding, etc.)
18A20 Epimorphisms, monomorphisms, special classes of morphisms, null morphisms 
18B50 Extensive, distributive, and adhesive categories
}
\date{\today}

\begin{abstract}
Let $\cC$ be the category of finite graphs.
Lov\`{a}sz shows that the semi-ring of isomorphism classes
of $\cC$ (with coproduct as sum, and product as multiplication)
is embedded into the direct product of the semi-ring of natural
numbers. Our aim is to generalize this result to other categories.
For this, one crucial property is that every object decomposes to a finite coproduct
of connected objects. We show that a locally-finite extensive category
satisfies this condition. Conversely, a category where
any object is decomposed into a finite coproduct
of connected objects is shown to be extensive.
The decomposition turns out to be unique.
Using these results, we give some sufficient conditions that
the semi-ring (the ring) of isomorphism classes of a locally finite
category embeds to the direct product of natural numbers (integers, respectively).
Such a construction of rings from a category is
a most primitive form of Burnside rings and Grothendieck rings.
\end{abstract}

\maketitle
\setcounter{tocdepth}{3}
\tableofcontents
\section{Introduction and main results}
Let $\N$ denote the semi-ring of non-negative integers.
Here, we mean by semi-ring a set $S$ equipped with 
two commutative binary operators $+$, $\times$,
where both operators give monoid structures on $S$, with 
unit denoted by $0$, $1$, respectively, such that the operators
satisfy the distributive laws and $0\times x=0$ for every $x\in S$.
The same object is often called a rig, because 
it is similar to the ring without the inverse with respect
to $+$ (thus, lacking ``negative,'' hence called ``ring $-$ negative = rig'').
In this paper, a monoid means a commutative monoid.

In Lov\`{a}sz's seminal work \cite[(3.6)Theorem]{LOVASZ-OPERATION},
he gives an injective semi-ring homomorphism of the ``Burnside semi-ring''
of the category\footnote{There seems no decisive name of 
this classical construction of semi-ring, e.g., appearing in the 
construction of Burnside rings. The definition is given 
shortly after as $\Dec(\cC,+,\times)$.}
of finite directed graphs (and more generally, of finite relational structures)
to an infinite product of the copies of the semi-ring $\N$.

The motivation of this study is to generalize this result 
in terms of category theory. We shall show that the notion 
of extensive categories \cite{CARBONI} plays a central role.
We shall define terminologies and state our main results, while
observing the Lov\`{a}sz's work. 
\begin{definition}
A category $\cC$ is locally finite, 
if for any two objects, the corresponding hom-set
is a finite set.
\end{definition}
For a category $\cC$, we denote by $\Dec(\cC)$
the set of isomorphism classes of $\cC$. ($\Dec$ means the
de-categorification.)
Let $\cC$ be a category with finite coproduct. This requires
the existence of an initial object $0$. We denote by $A+B$
the coproduct.
Then, $\Dec(\cC)$ has a monoid structure inherited
from $+$, denoted by $(\Dec(\cC),+)$.
This kind of constructions is well-known,
in the context of Burnside rings and Grothendieck groups.
Here we deal with most primitive cases, arising from 
the coproducts and products in a category.
\begin{definition}
Let $\cC$ be a locally finite category, and $D$ its object. 
We define
$$
h_D: \Dec(\cC) \to \N, \quad [A] \mapsto \#\Hom(D,A),
$$
where $\#$ denotes the cardinality of the set.
\end{definition}

The following gives the definition of connectedness.
\footnote{This definition requires less than the standard
that requires preservation of infinite coproducts \cite[\S5.2, P.453]{MONOIDALT}, 
but appropriate in
the present context where infinite coproducts 
may not exist. Both become equivalent if the category is
infinitary extensive, which is called ``extensive''
in \cite[5.1.1 Definition, P.449]{MONOIDALT}, see 
the footnote on Proposition~\ref{prop:connectivity} for the equivalence.}
\begin{definition} 
\label{def:conn}
An object $C\in \cC$ is said to be connected,
if the natural mapping
\begin{equation}\label{eq:conn}
\Hom(C,A)\coprod \Hom(C,B) \to \Hom(C, A+B) 
\end{equation}
is bijective. 
\end{definition}
By definition, the following holds.
\begin{proposition}
Let $\cC$ be a locally finite category with finite
coproducts. Let $C$ be a connected object. 
Then, 
$$
h_C:(\Dec(\cC),+) \to (\N,+)
$$ 
is a monoid morphism.
\end{proposition}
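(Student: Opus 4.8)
The plan is to verify directly the two defining conditions of a monoid morphism: that $h_C$ respects the binary operation and that it sends the unit to the unit. Recall that the monoid $(\Dec(\cC),+)$ has addition given by $[A]+[B]=[A+B]$ and unit $[0]$, where $0$ is the initial object (the empty coproduct). Since the functor $\Hom(C,-)$ carries isomorphic objects to sets of equal cardinality, $h_C$ is well defined on isomorphism classes to begin with, so it remains only to establish these two equalities.

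For additivity I would simply combine the defining bijection of connectedness with the elementary fact that in the category of sets a coproduct is a disjoint union, whose cardinality is the sum of the cardinalities of its summands. Concretely, for objects $A,B$ of $\cC$,
\[
h_C([A]+[B]) = \#\Hom(C,A+B) = \#\Hom(C,A)+\#\Hom(C,B) = h_C([A])+h_C([B]),
\]
where the first equality uses $[A]+[B]=[A+B]$ together with the definition of $h_C$, and the middle equality combines the connectedness bijection~\eqref{eq:conn} with the computation of the cardinality of a disjoint union. All quantities are finite by local finiteness, so the arithmetic makes sense.

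The one point requiring a small argument — and, as I expect, the only place where local finiteness is genuinely needed — is preservation of the unit, namely $h_C([0])=\#\Hom(C,0)=0$. Here I would apply the connectedness bijection in the degenerate case $A=B=0$. Since $0$ is initial we have $0+0\cong 0$, so~\eqref{eq:conn} furnishes a bijection $\Hom(C,0)\coprod\Hom(C,0)\to\Hom(C,0)$, whence $2\cdot\#\Hom(C,0)=\#\Hom(C,0)$. Because $\Hom(C,0)$ is a finite set, this equation forces $\#\Hom(C,0)=0$, that is, $h_C([0])=0$. Together with additivity this shows that $h_C$ is a monoid morphism. I would flag, as the expected (if minor) obstacle, that without finiteness the relation $2n=n$ is satisfied by every infinite cardinal, so this last step is exactly where the local-finiteness hypothesis is indispensable.
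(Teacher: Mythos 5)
Your proof is correct and follows the route the paper intends (the paper states this proposition without proof, as following ``by definition''): additivity is the connectedness bijection plus counting a disjoint union, and unit preservation is the $2n=n$ argument forced by local finiteness. Your explicit treatment of $h_C([0])=0$ fills in the one step the paper leaves entirely implicit, and your remark that finiteness is genuinely needed there is accurate.
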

Suppose that $\cC$ has finite direct products (and hence a terminal object). It induces
a monoid structure $(\Dec(\cC),\times)$.
By definition, 
$$
\Hom(D, A\times B) \to \Hom(D,A)\times \Hom(D,B)
$$
is a bijection for any object $D$, and we have the following.
\begin{proposition}
Let $\cC$ be a locally finite category with finite
products. Let $D$ be any object. 
Then,
$$
h_D:(\Dec(\cC),\times) \to (\N,\times)
$$ 
is a morphism of monoids.
\end{proposition}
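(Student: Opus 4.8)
The plan is to verify the two defining conditions for a morphism of monoids: compatibility with the binary operation and preservation of the unit. Before either, I would first confirm that $h_D$ is well-defined on $\Dec(\cC)$. If $A\cong A'$ via an isomorphism $f$, then post-composition $g\mapsto f\circ g$ is a bijection $\Hom(D,A)\to\Hom(D,A')$, so the two cardinalities agree and $h_D([A])$ is independent of the chosen representative. By the same reasoning the multiplication on $\Dec(\cC)$, namely $[A]\times[B]=[A\times B]$, is itself well-defined, since finite products are unique up to canonical isomorphism and products of isomorphic objects are isomorphic.

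For multiplicativity, I would invoke the bijection
$$
\Hom(D,A\times B)\xrightarrow{\ \sim\ }\Hom(D,A)\times\Hom(D,B)
$$
recorded just above the statement, which is precisely the universal property of the product. All three hom-sets are finite by local finiteness, so taking cardinalities gives
$$
h_D([A\times B])=\#\Hom(D,A\times B)=\#\Hom(D,A)\cdot\#\Hom(D,B)=h_D([A])\cdot h_D([B]),
$$
which is exactly the identity $h_D([A]\times[B])=h_D([A])\times h_D([B])$ in $\N$.

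For the unit, the multiplicative identity of $(\Dec(\cC),\times)$ is $[1]$, where $1$ denotes the terminal object (whose existence is guaranteed by the assumption of finite products). Since there is exactly one morphism $D\to 1$, we obtain $h_D([1])=\#\Hom(D,1)=1$, the multiplicative unit of $\N$, completing the verification. There is no genuine obstacle here: the entire content is carried by the universal property of the product, and the only point deserving a word of care is the well-definedness on isomorphism classes, which is handled at the outset.
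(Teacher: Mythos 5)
Your proof is correct and follows the same route the paper takes: the paper simply records the bijection $\Hom(D,A\times B)\cong\Hom(D,A)\times\Hom(D,B)$ coming from the universal property of the product and lets the cardinality count do the work. Your added remarks on well-definedness and preservation of the unit $[1]$ are fine and merely make explicit what the paper leaves implicit.
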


The following definition is according to Carboni et.\ al., 
see \cite[Definition~3.1 and Proposition~3.2]{CARBONI}.
\begin{definition}
Let $\cC$ be a category with finite coproducts and finite
products. If the natural morphism
$$
A\times B + A\times C \to A\times (B+C)
$$
is an isomorphism, then $\cC$ is said to be distributive.
In a distributive category, it holds that
$$
A \times 0 \cong 0.
$$
\end{definition}
As a consequence, we have
\begin{proposition}
Let $\cC$ be a distributive category.
Then, $(\Dec(\cC),+,\times)$ is a semi-ring. 
If $\cC$ is moreover locally finite and $C$ is a connected object, then
$$
h_C:(\Dec(\cC),+,\times) \to (\N,+,\times)
$$
is a semi-ring homomorphism.
\end{proposition}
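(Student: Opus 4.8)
The plan is to verify the semi-ring axioms on $\Dec(\cC)$ by descending the defining category-theoretic isomorphisms to equalities between isomorphism classes, and then to obtain the homomorphism assertion simply by combining the two monoid-morphism propositions already established.

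First I would record that $(\Dec(\cC),+)$ and $(\Dec(\cC),\times)$ are already commutative monoids: the coproduct furnishes the additive monoid with unit $[0]$, and the product furnishes the multiplicative monoid with unit $[1]$ (the class of the terminal object). In each case the relevant universal property supplies natural associativity and commutativity isomorphisms, and passing to isomorphism classes converts these into genuine equalities. The one point worth checking is well-definedness: both $+$ and $\times$ on $\Dec(\cC)$ are independent of the chosen representatives because coproduct and product are functorial, hence carry isomorphisms to isomorphisms.

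Next I would establish the two remaining semi-ring axioms, which are precisely the two conditions packaged into the definition of a distributive category. The distributivity isomorphism $A\times B + A\times C \xrightarrow{\sim} A\times(B+C)$ descends to the identity $[A]\times[B]+[A]\times[C] = [A]\times([B]+[C])$ in $\Dec(\cC)$, yielding the distributive law; and the isomorphism $A\times 0 \cong 0$ descends to $[A]\times[0]=[0]$, i.e. $x\times 0 = 0$ for every $x$. Together with the two monoid structures this is the complete list of semi-ring axioms, so $(\Dec(\cC),+,\times)$ is a semi-ring.

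For the homomorphism statement I would invoke the two preceding propositions: when $C$ is connected and $\cC$ is locally finite, $h_C$ is a morphism $(\Dec(\cC),+)\to(\N,+)$ of additive monoids, and (as for any object) $h_C$ is a morphism $(\Dec(\cC),\times)\to(\N,\times)$ of multiplicative monoids. Being a monoid morphism in each case, $h_C$ automatically sends the additive unit $[0]$ to $0$ and the multiplicative unit $[1]$ to $1$ and respects both operations, so it is a semi-ring homomorphism. I do not expect a genuine obstacle here, since the content is almost entirely bookkeeping; the step requiring the most care is confirming that the distributive law in $\Dec(\cC)$ is induced by the canonical comparison morphism named in the definition, so that no coherence ambiguity intervenes when we pass from the categorical isomorphism to the equality of classes.
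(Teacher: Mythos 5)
Your proposal is correct and matches the paper's (implicit) argument: the paper states this proposition as an immediate consequence of the two preceding monoid-morphism propositions together with the distributivity isomorphisms, which is exactly the bookkeeping you carry out explicitly.
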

For example, the category $\FinSets$ of finite sets is distributive, 
and the semi-ring $(\Dec(\FinSets),+,\times)$ is naturally isomorphic to $(\N,+,\times)$.
This might be considered as a definition of $\N$.
In this case, if we take $C$ as a singleton, $h_C$ gives an isomorphism.

In a category, it holds that
$$
A \cong B \Rightarrow \#\Hom(X,A)=\#\Hom(X,B) \mbox{ for all objects } X. 
$$
A category where the converse holds is said to be combinatorial. 
\begin{definition} (\cite[1.7~Definition]{PULTR})
\label{def:combinatorial}
A locally finite category $\cC$ is said to be
combinatorial, if for all objects $X$
$$
\#\Hom(X,A)=\#\Hom(X,B)
$$
hold then $A$ is isomorphic to $B$.
\end{definition}
Lov\`{a}sz \cite{LOVASZ-OPERATION} 
proved that the categories of operations with finite structures
(including the category of finite graphs) are combinatorial.
Various sufficient conditions for combinatoriality are known.
See Pultr \cite{PULTR}, Isbell \cite{ISBELL}, 
Dawar, Jakl, and Reggio \cite{DAWAR},
Reggio \cite{REGGIO}, and Fujino and Matsumoto \cite{FUJINO}.

If any object $X$ of $\cC$ is a finite coproduct of
connected objects $C_1,\ldots,C_n$, then 
$$
\Hom(X,-)\cong \prod_{i=1}^n\Hom(C_i,-)
$$
follows, and to show $A\cong B$, it suffices to show $h_C(A)=h_C(B)$
for every connected $C$ if $\cC$ is combinatorial. Then
$h_C$ is a monoid morphism, and hence $\Dec(\cC,+)$ embeds
into a direct product of copies of $(\N,+)$.
Thus we have the following immediate consequence,
which is a straightforward generalization of Lov\`{a}sz's arguments.
\begin{proposition}\label{prop:lovasz}
Let $\cC$ be a locally finite category with finite coproducts.
Suppose that $\cC$ is combinatorial, and any object is a 
coproduct of a finite number of connected objects. 
Choose a representative system $C_i$ $(i\in I)$ from 
the set of isomorphism classes of connected objects. Then,
$$
\prod_{i\in I}h_{C_i}:(\Dec(\cC),+) \to \prod_{i\in I}(\N,+)
$$
is an injective homomorphism of monoids.
If, moreover, $\cC$ is distributive, then
$$
\prod_{i\in I}h_{C_i}:(\Dec(\cC),+,\times) \to \prod_{i\in I}(\N,+,\times)
$$
is an injective homomorphism of semi-rings.
\end{proposition}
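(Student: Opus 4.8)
The plan is to assemble the statement from the propositions already established, with the combinatorial hypothesis supplying injectivity, so that very little genuinely new work is required.

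First I would verify the homomorphism property. For each connected $C_i$ the preceding proposition gives that $h_{C_i}\colon(\Dec(\cC),+)\to(\N,+)$ is a monoid morphism; by the universal property of the product monoid $\prod_{i\in I}(\N,+)$, a map into a product is a morphism precisely when each of its components is, so the tuple $\prod_{i\in I}h_{C_i}$ is automatically a monoid morphism. In the distributive case, the distributive-category proposition upgrades each $h_{C_i}$ to a semi-ring homomorphism (using that $\cC$ is distributive and that $C_i$ is connected), and the corresponding product of semi-ring homomorphisms into $\prod_{i\in I}(\N,+,\times)$ is again a semi-ring homomorphism. Thus the homomorphism claims are formal consequences of the earlier results.

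The real content is injectivity. Suppose $[A],[B]\in\Dec(\cC)$ have the same image, that is $\#\Hom(C_i,A)=\#\Hom(C_i,B)$ for every $i\in I$. The key step is to promote this equality on the connected test objects to the equality $\#\Hom(X,A)=\#\Hom(X,B)$ for \emph{every} object $X$. By hypothesis $X$ decomposes as a finite coproduct $X\cong C_1'+\cdots+C_m'$ of connected objects, and each $C_k'$ is isomorphic to one of the chosen representatives, say $C_{i(k)}$. The universal property of the coproduct furnishes a bijection $\Hom(X,Y)\cong\prod_{k=1}^m\Hom(C_k',Y)$ natural in $Y$, whence $\#\Hom(X,A)=\prod_{k=1}^m\#\Hom(C_{i(k)},A)$ and likewise for $B$. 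Applying the assumed equality factor by factor then yields $\#\Hom(X,A)=\#\Hom(X,B)$.

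Finally I would invoke combinatoriality: equality of $\#\Hom(X,-)$ against all objects $X$ forces $A\cong B$, that is $[A]=[B]$ in $\Dec(\cC)$, so the map is injective; the same injectivity argument serves both the monoid and the semi-ring statements. I do not expect a genuine obstacle, since the proof is a synthesis of the earlier propositions. The only point demanding care is the logical link in the injectivity step: combinatoriality produces an isomorphism only from equality of Hom-counts against \emph{every} object, so one must explicitly use the decomposition of an arbitrary $X$ into connected pieces in order to transfer the assumed equality from the connected representatives $C_i$ to all test objects.
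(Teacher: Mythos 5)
Your proof is correct and follows essentially the same route as the paper: the homomorphism claims are read off from the earlier propositions on connected objects and distributivity, and injectivity is obtained by decomposing an arbitrary test object $X$ into connected summands, using $\Hom(X,-)\cong\prod_k\Hom(C_k',-)$ to transfer the equality of Hom-counts from the representatives $C_i$ to all $X$, and then invoking combinatoriality. The point you flag as needing care (that combinatoriality requires equality against \emph{every} $X$, not just the connected ones) is precisely the step the paper makes in the paragraph preceding the proposition.
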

Thus, it becomes important whether each object
is decomposed as a coproduct of a finite number of 
connected objects. 
The following are main results of this paper. 
\begin{theorem}\label{th:main} $ $

\begin{enumerate}
\item \label{enum:main2}
Let $\cC$ be a locally-finite extensive category.
Then, every object is a coproduct of a finite number of connected objects.
\item \label{enum:main1}
Let $\cC$ be a category with finite coproducts such that every object $D$ is
a finite coproduct of connected objects. Then, $\cC$
is an extensive category. 
\item \label{enum:main3}
Let $\cC$ be an extensive category. 
If an object $D$ decomposes to
a finite coproduct of connected objects,
then the decomposition is unique, up to ordering and isomorphisms of each component.
\end{enumerate}
\end{theorem}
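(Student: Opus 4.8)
The plan is to isolate one structural fact about connected objects and feed it into all three parts.

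\emph{Groundwork (facts in an extensive category).} First I would record the dictionary between connectivity and indecomposability. Using that coproducts are universal (pullbacks along the injections $A\to A+B$ and $B\to A+B$ exist and reconstitute the domain as a coproduct), together with disjointness and strictness of the initial object $0$, I would show that every coproduct injection is a monomorphism, that two distinct injections have disjoint images, and that an object $C$ is connected if and only if $C\not\cong 0$ and $C$ is \emph{indecomposable} (i.e.\ $C\cong A+B$ forces $A\cong 0$ or $B\cong 0$). Concretely: given $f\colon C\to A+B$, universality yields $C\cong f^{-1}(A)+f^{-1}(B)$, so indecomposability makes one summand $0$ and $f$ factor through $A$ or through $B$, giving surjectivity of \eqref{eq:conn}; disjointness together with $C\not\cong 0$ gives injectivity. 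Conversely, if $C\cong A+B$ with $B\not\cong 0$, then $\id_C$ cannot factor through either injection, so \eqref{eq:conn} fails to be onto at $\id_C$.

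\emph{Part \eqref{enum:main2}.} For an object $X$ let $\mathcal S(X)$ denote the poset of summands (subobjects $A\hookrightarrow X$ admitting a complement $X\cong A+B$). The key lemma is that $\mathcal S(X)$ is \emph{finite}. I would define an injection $\mathcal S(X)\to\Hom(X,X+X)$ sending a decomposition $X\cong A+B$ to the morphism $\phi_A$ that includes $A$ into the left copy of $X$ and $B$ into the right copy; since the pullback of $\phi_A$ along the left injection of $X+X$ recovers $A$ by universality, the assignment $A\mapsto\phi_A$ is injective, and local finiteness gives $\#\mathcal S(X)\le\#\Hom(X,X+X)<\infty$. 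Then I induct on $s(X):=\#\mathcal S(X)$: if $X\cong 0$ it is the empty coproduct; if $X$ is indecomposable and non-initial it is connected by the groundwork; otherwise $X\cong A+B$ with $A,B\not\cong 0$, and because summands of a proper non-initial summand are again summands of $X$ while $X\notin\mathcal S(A)$, one has $s(A),s(B)<s(X)$, so the inductive hypothesis decomposes $A$ and $B$, hence $X$. The main obstacle is exactly the finiteness of $\mathcal S(X)$, for which the embedding into $\Hom(X,X+X)$ is the crucial device.

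\emph{Part \eqref{enum:main1}.} Here I would verify extensivity via the criterion of \cite[Proposition~3.2]{CARBONI}: it suffices to show that the canonical functor $\cC/A\times\cC/B\to\cC/(A+B)$, $(P\xrightarrow{\alpha}A,\,Q\xrightarrow{\beta}B)\mapsto(P+Q\xrightarrow{\alpha+\beta}A+B)$, is an equivalence. In this setting the coproduct injections are monic directly from the hypotheses: for $u,v\colon T\to A$ with $i_Au=i_Av$, decomposing $T$ into connected components and applying injectivity in \eqref{eq:conn} gives $u=v$; this yields faithfulness. For essential surjectivity, given $f\colon Z\to A+B$ decompose $Z\cong\coprod_k C_k$ into connected components; by connectedness each composite $C_k\to Z\to A+B$ factors uniquely through $A$ or through $B$, and collecting the two kinds of components produces $Z\cong P+Q$ with $f$ of the form $\alpha+\beta$. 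For fullness, a morphism $g\colon P+Q\to P'+Q'$ over $A+B$ must send each connected component of $P$ into $P'$ (a component landing in $Q'$ would map to $B$ while it already maps to $A$, contradicting disjointness in \eqref{eq:conn}), and symmetrically for $Q$, so $g=g_P+g_Q$ over $A$ and $B$. Throughout, the hypothesis supplies the connected decompositions and \eqref{eq:conn} both splits and separates the factorizations.

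\emph{Part \eqref{enum:main3}.} For uniqueness, suppose $D\cong C_1+\cdots+C_m\cong C'_1+\cdots+C'_n$ with all $C_i,C'_j$ connected. By connectedness of $C_i$, its injection $C_i\to D\cong\coprod_j C'_j$ factors through a unique component, giving $f_i\colon C_i\to C'_{j(i)}$ with $i_{C_i}=i_{C'_{j(i)}}\circ f_i$; symmetrically each $C'_j$ gives $g_j\colon C'_j\to C_{k(j)}$ with $i_{C'_j}=i_{C_{k(j)}}\circ g_j$. Composing, $i_{C_i}=i_{C_{k(j(i))}}\circ g_{j(i)}\circ f_i$; since distinct coproduct injections into $D$ have disjoint images and $i_{C_i}$ is monic (groundwork), this forces $k(j(i))=i$ and $g_{j(i)}\circ f_i=\id_{C_i}$, and symmetrically $f_{k(j)}\circ g_j=\id_{C'_j}$. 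Hence $j$ and $k$ are mutually inverse bijections, $m=n$, and $C_i\cong C'_{j(i)}$ for all $i$. The only delicate point is that a single morphism cannot factor through two different components, which is precisely the disjointness recorded in the groundwork.
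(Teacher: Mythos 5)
Your proposal is correct, and in two of the three parts it takes a genuinely different route from the paper. For part (\ref{enum:main2}) the underlying idea is the same as the paper's: both arguments use local finiteness of $\Hom(X,X+X)$ to bound how far $X$ can decompose --- the paper shows a decomposition into $n$ non-initial summands forces $\#\Hom(X,X+X)\ge 2^n$ and then iterates the splitting supplied by Proposition~\ref{prop:connectivity}, while you embed the set of summand-subobjects $\mathcal S(X)$ into $\Hom(X,X+X)$ and induct on its cardinality; your bookkeeping is cleaner, but the engine is identical. For part (\ref{enum:main1}) the paper verifies Definition~\ref{def:extensive} directly: it sorts the connected components of the domain of $f\colon A\to X+Y$ into $A_X+A_Y$ (Lemma~\ref{lem:construction}, Proposition~\ref{prop:AXAY}) and proves by hand that these squares are pullbacks (Proposition~\ref{prop:left-pullback}); you perform the same sorting, but package it as essential surjectivity and fullness of the slice functor $\cC/A\times\cC/B\to\cC/(A+B)$, outsourcing the equivalence of that criterion with the pullback formulation to Carboni--Lack--Walters --- note this is their Definition~2.1 together with Proposition~2.2, not Proposition~3.2, which concerns distributive categories. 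That route buys brevity at the cost of invoking a cited equivalence that the paper instead proves the relevant half of by hand. For part (\ref{enum:main3}) your argument is genuinely different and more direct: the paper proves a subtraction lemma (Lemma~\ref{lem:decomp}) and cancellation of connected summands (Lemma~\ref{lem:cancel}) and then peels off components one at a time, whereas you factor the injections in both directions, compose, and use disjointness plus monicity of coprojections to obtain mutually inverse index bijections $j$ and $k$ together with isomorphisms $C_i\cong C'_{j(i)}$ in one stroke, with no cancellation lemma needed. Three small repairs: in the groundwork, the claim that $\id_C$ factors through neither injection requires both $A\not\cong 0$ and $B\not\cong 0$ (if $A\cong 0$ then $\id_C$ does factor through $i_B$); in part (\ref{enum:main3}) the case of an empty decomposition should be disposed of separately, since a morphism $C'_j\to 0$ contradicts strictness plus connectedness, as the paper does at the start of Corollary~\ref{cor:UFD}; and the citation correction noted above.
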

The notion of extensive categories is established by
Carboni et.\ al.\ \cite{CARBONI}, and widely accepted
as a natural generalization of distributive categories.
For example, any topos is extensive.
We shall give a definition of extensive categories in the next section.
The following proposition is known.
\begin{proposition}\label{prop:ext-dist} (\cite[Proposition~4.5]{CARBONI})
An extensive category with finite products is distributive.
\end{proposition}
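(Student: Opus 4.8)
The plan is to prove that the canonical comparison morphism
$$
\delta\colon A\times B + A\times C \to A\times(B+C)
$$
is an isomorphism, working from the characterization of extensivity in terms of coproducts being disjoint and \emph{universal} (stable under pullback); equivalently, that for each pair $B,C$ the coproduct functor $\cC/B\times\cC/C \to \cC/(B+C)$ is an equivalence. The key preliminary observation is that, since $\cC$ has finite products and hence a terminal object $1$, the product $A\times X$ is precisely the pullback of $A\to 1 \leftarrow X$. Thus $A\times(B+C)$ is the pullback of $A\to 1$ along $B+C \to 1$, and the projection $\pi\colon A\times(B+C)\to B+C$ is the base-change of $A\to 1$ along $B+C\to 1$.

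First I would compute the base-change of $\pi$ along the two coproduct injections $\iota_B\colon B\to B+C$ and $\iota_C\colon C\to B+C$. By the pasting lemma for pullbacks, the pullback of $A\times(B+C)\to B+C$ along $\iota_B$ equals the pullback of $A\to 1$ along the composite $B\to B+C\to 1$, namely $A\times B$, with structure map the projection to $B$; symmetrically one obtains $A\times C$ over $C$. These pullbacks along coproduct injections exist in any extensive category, being part of the data witnessing the slice equivalence.

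Next I would invoke universality of coproducts for the morphism $\pi\colon A\times(B+C)\to B+C$: pulling the coproduct $B+C$ back along $\pi$ reconstructs $A\times(B+C)$ as the coproduct of the two base-changes just computed, so $A\times(B+C)\cong A\times B + A\times C$. The step I expect to be the real obstacle is to verify that this abstractly obtained isomorphism is exactly the canonical comparison map $\delta$, and not merely some isomorphism between the two objects. This is a uniqueness argument: both $\delta$ and the universality isomorphism are maps over $B+C$ whose restrictions along $\iota_B$ and $\iota_C$ are the identities on $A\times B$ and $A\times C$, and the equivalence $\cC/(B+C)\simeq \cC/B\times\cC/C$ forces them to coincide. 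Phrased through the comparison functor $\Phi\colon \cC/(B+C)\to\cC/B\times\cC/C$ and its quasi-inverse $\Psi\colon (p,q)\mapsto p+q$, one has $\Phi(\pi)\cong(A\times B\to B,\ A\times C\to C)$, and applying $\Psi$ to this pair returns precisely $\delta$; hence $\delta$ is an isomorphism.

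Finally, the definition of distributive above also records the consequence $A\times 0\cong 0$. This follows immediately from the fact that an extensive category has a strict initial object, so that the projection $A\times 0\to 0$ forces $A\times 0\cong 0$; alternatively it is the nullary instance of the universality argument. Combining the binary isomorphism $\delta$ with this, we conclude that $\cC$ is distributive.
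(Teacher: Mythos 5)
Your proof is correct. Note that the paper does not actually prove this proposition---it is quoted without proof from Carboni--Lack--Walters \cite[Proposition~4.5]{CARBONI}---and your argument (identify $A\times X$ as the pullback of $A\to 1\leftarrow X$, compute the pullbacks of the projection $A\times(B+C)\to B+C$ along the two coprojections via pasting to get $A\times B$ and $A\times C$ with structure maps $\id_A\times i_B$ and $\id_A\times i_C$, then apply the ``pullbacks imply coproduct'' half of the extensivity axiom, with strictness of $0$ handling the nullary case) is essentially the standard proof given in that reference, including the check that the resulting isomorphism is the canonical comparison map.
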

Thus, our main theorem yields the following theorem.
\begin{theorem}\label{th:main-emb}
Let $\cC$ be a locally finite, extensive, and combinatorial category.
Let $C_i$ $(i\in I)$ be representatives of the set of 
isomorphism classes of connected objects.
Then,
$$
\prod_{i\in I}h_{C_i}:(\Dec(\cC),+) \to \prod_{i\in I}(\N,+)
$$
is an injective morphism of monoids.
If, moreover, $\cC$ has finite products, then
$$
\prod_{i\in I}h_{C_i}:(\Dec(\cC),+,\times) \to \prod_{i\in I}(\N,+,\times)
$$
is an injective morphism of semi-rings.
\end{theorem}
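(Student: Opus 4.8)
The plan is to deduce this theorem purely formally by assembling results already established, since the substantive work has been carried out in Theorem~\ref{th:main} and in the cited propositions. First I would record that an extensive category has finite coproducts by definition (this is part of the definition to be given in the next section), so that $\cC$ is a locally finite category with finite coproducts, the monoid $(\Dec(\cC),+)$ is defined, and each $h_{C_i}$ makes sense.

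Next I would invoke Theorem~\ref{th:main}(\ref{enum:main2}): because $\cC$ is locally finite and extensive, every object decomposes as a finite coproduct of connected objects. Together with the standing hypothesis that $\cC$ is combinatorial, this matches exactly the hypothesis list of Proposition~\ref{prop:lovasz}. Applying the first assertion of that proposition then gives immediately that
$$
\prod_{i\in I}h_{C_i}\colon(\Dec(\cC),+)\to\prod_{i\in I}(\N,+)
$$
is an injective homomorphism of monoids, which is the first claim.

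For the semi-ring statement I would bring in the additional hypothesis that $\cC$ has finite products. By Proposition~\ref{prop:ext-dist}, an extensive category with finite products is distributive; hence $\cC$ satisfies the distributivity hypothesis appearing in the second half of Proposition~\ref{prop:lovasz}. That second assertion then yields directly that
$$
\prod_{i\in I}h_{C_i}\colon(\Dec(\cC),+,\times)\to\prod_{i\in I}(\N,+,\times)
$$
is an injective homomorphism of semi-rings, completing the proof.

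In this sense there is no genuine obstacle internal to the present argument: the theorem is a corollary obtained by feeding Theorem~\ref{th:main}(\ref{enum:main2}) into Proposition~\ref{prop:lovasz}. The only points that require care are verifying that the hypotheses line up --- in particular that ``extensive'' supplies both the finite coproducts needed for $(\Dec(\cC),+)$ to be a monoid and, in combination with finite products via Proposition~\ref{prop:ext-dist}, the distributivity needed for the multiplicative structure. All of the real difficulty is concentrated in the decomposition-into-connected-objects statement of Theorem~\ref{th:main}(\ref{enum:main2}), which this proof merely consumes.
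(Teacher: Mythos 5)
Your proposal is correct and follows exactly the route the paper intends: the paper derives Theorem~\ref{th:main-emb} by feeding Theorem~\ref{th:main}(\ref{enum:main2}) into Proposition~\ref{prop:lovasz}, using Proposition~\ref{prop:ext-dist} to supply distributivity for the semi-ring statement. Nothing is missing.
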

There is a universal way to obtain a group from a monoid
(the left adjoint to the forgetful functor),
called the Grothendieck group of the monoid. This makes a monoid into a group, 
here denoted by
$$
(\Dec(\cC),+) \mapsto (\Dec(\cC),+,-).
$$
The monoid $\N$ is transferred to the additive group $\Z$.
The same construction makes a semi-ring into a ring,
which we denote
$$
(\Dec(\cC),+,\times) \mapsto (\Dec(\cC),+,-,\times).
$$

Theorem~\ref{th:main}(\ref{enum:main3}) implies the following
\begin{corollary}\label{cor:main}
Under the conditions of Theorem~\ref{th:main},
we have a canonical monoid injection 
\begin{equation}\label{eq:grothen-inj}
(\Dec(\cC),+) \hookrightarrow (\Dec(\cC),+,-), 
\end{equation}
and if moreover $\cC$ is combinatorial,
then we have an injective group homomorphism
\begin{equation}\label{eq:hom-count-inj}
\prod_{i\in I}h_{C_i}:(\Dec(\cC),+,-) \to \prod_{i\in I}(\Z,+). 
\end{equation}
(Note that $\prod_{i\in I}h_{C_i}$ here is the extension to
the Grothendieck group.)
In this case, if $\cC$ has finite products, then (\ref{eq:grothen-inj}) is
an injection of semi-rings
$$
(\Dec(\cC),+,\times) \hookrightarrow (\Dec(\cC),+,-,\times),
$$
and (\ref{eq:hom-count-inj}) is an injective ring homomorphism
$$
(\Dec(\cC),+,-,\times) \to \prod_{i\in I}(\Z,+,\times).
$$
\end{corollary}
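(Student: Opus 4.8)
The plan is to reduce the entire corollary to the structural fact that, under the hypotheses of Theorem~\ref{th:main} (that is, $\cC$ locally finite and extensive), the additive monoid $(\Dec(\cC),+)$ is the free commutative monoid $\bigoplus_{i\in I}\N$ on the set $I$ of isomorphism classes of connected objects. Indeed, Theorem~\ref{th:main}(\ref{enum:main2}) guarantees that every object is a finite coproduct of connected objects, so the map sending $[A]$ to the multiset of its connected components surjects onto $\bigoplus_{i\in I}\N$; Theorem~\ref{th:main}(\ref{enum:main3}) says this multiset is well defined and recovers $[A]$, so the map is a monoid isomorphism. In particular $(\Dec(\cC),+)$ is cancellative. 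This single observation carries all the weight; everything else is a formal consequence of the universal property of group completion.

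First I would invoke the standard fact that a commutative monoid embeds into its Grothendieck group if and only if it is cancellative. Together with the previous paragraph this gives the canonical monoid injection $(\ref{eq:grothen-inj})$. (One may also argue by hand: if $A+C\cong B+C$, then comparing connected decompositions and cancelling the components of $C$ inside the free monoid $\bigoplus_{i\in I}\N$ yields $A\cong B$.)

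Next, for $(\ref{eq:hom-count-inj})$, I would begin from the injective monoid homomorphism $\prod_{i\in I}h_{C_i}\colon(\Dec(\cC),+)\to\prod_{i\in I}(\N,+)$ provided by Theorem~\ref{th:main-emb}, where combinatoriality is used. Composing with the coordinatewise injection $\prod_{i\in I}\N\hookrightarrow\prod_{i\in I}\Z$ produces an injective monoid map into a group, which by the universal property of the Grothendieck group factors uniquely through $(\Dec(\cC),+,-)$; this is the asserted extension. To see that the induced group homomorphism $\bar f$ is injective, take $x$ in its kernel and write $x=[A]-[B]$; then $h_{C_i}(A)=h_{C_i}(B)$ in $\Z$ for every $i$, hence already in $\N$ since the inclusion is injective, hence $[A]=[B]$ by injectivity on the monoid, so $x=0$. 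The point deserving care here is to complete only the source and embed the target into $\prod_{i\in I}\Z$ directly, rather than trying to identify the Grothendieck group of the infinite product $\prod_{i\in I}\N$.

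Finally, for the semi-ring and ring statements, when $\cC$ has finite products Proposition~\ref{prop:ext-dist} makes $\cC$ distributive, so $(\Dec(\cC),+,\times)$ is a semi-ring and each $h_{C_i}$ is a semi-ring homomorphism. Since multiplication is additively bilinear and the additive monoid is cancellative, it descends to the Grothendieck group exactly as in the passage from $\N$ to $\Z$, yielding the ring $(\Dec(\cC),+,-,\times)$, and the extended maps respect multiplication by construction. As every injectivity claim concerns only the underlying additive structure, these are inherited verbatim from the monoid and group statements already established. The main obstacle is thus not any of these formal steps but the cancellativity input itself: it is precisely the uniqueness assertion Theorem~\ref{th:main}(\ref{enum:main3}) that prevents the natural map to the Grothendieck group from collapsing, and without it the whole chain of injections would break down.
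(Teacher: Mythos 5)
Your proposal is correct and follows the same overall strategy as the paper: both rest on the observation that Theorem~\ref{th:main}(\ref{enum:main2}) and (\ref{enum:main3}) make $(\Dec(\cC),+)$ the free commutative monoid on the isomorphism classes of connected objects, after which everything is formal. The one place where you genuinely diverge is the injectivity of the extension (\ref{eq:hom-count-inj}): the paper proves a dedicated lemma asserting that for a \emph{free} commutative monoid $M$ with a monoid injection $g:M\hookrightarrow G$ into a group, the induced map on the Grothendieck group is injective, and its proof explicitly separates positive and negative coefficients over the free generators and invokes freeness at the end. You instead use the more general and more elementary fact that any element of the Grothendieck group is a difference $[A]-[B]$ of honest monoid elements, so $\bar f([A]-[B])=0$ forces $g([A])=g([B])$, hence $[A]=[B]$ and $[A]-[B]=0$; this needs no freeness at all for that step (freeness is still needed, by either route, to get the cancellativity underlying (\ref{eq:grothen-inj})). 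Your version is slightly cleaner and isolates exactly what each hypothesis is used for; the paper's lemma is stated in a form tailored to the free case. Your handling of the multiplicative structure (bilinearity descending to the group completion, injectivity being purely additive) matches the paper's appeal to the functoriality of the Grothendieck construction on semi-rings.
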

For example, 
consider the functor categories $\FinSets^\cD$
for a finite category $\cD$. 
It is locally finite, and is shown to be
combinatorial, by methods in for example \cite{FUJINO} \cite{PULTR}. 
It is an elementary topos, and hence is extensive \cite{CARBONI},
and has finite products.
By Theorems~\ref{th:main}, \ref{th:main-emb} and Corollary~\ref{cor:main},
$\Dec(\FinSets^\cD,+,-,\times)$ embeds into a direct product of 
copies of $\Z$. In particular, there are no nilpotent elements.
By the same argument, a similar embedding is obtained for the category of 
$G$-finite sets where $G$ is a group, which is known as a theorem 
by Burnside.

\section{Proof of main results}
\subsection{Extensive categories}
This section follows Carboni et.\ al.\ \cite{CARBONI}.
For a coproduct diagram
$$
A \to A + B \leftarrow B,
$$
we call $A \to A+B$ the coprojection, and denote by $i_A$.
\begin{definition}\label{def:extensive}
A category $\cE$ is an extensive category, if it has finite coproducts
and satisfies the following conditions.
\begin{enumerate}
 \item For any morphism $f:A \to X_1+X_2$, there is a pullback
$X_1\prod_{X_1+X_2} A$ along the coprojection.
 \item Suppose that the following diagram commutes.
       \begin{equation}\label{eq:extensive}
	\begin{tikzcd}
	 A_1\ar{r}\ar{d} & A\ar{d}{f}& A_2\ar{l}\ar{d}\\
	 X_1\ar{r}[swap]{i_{X_1}} & X_1+X_2 & X_2\ar{l}{i_{X_2}}.
	\end{tikzcd}
       \end{equation}
Then, the top row is a coproduct, if and only if the both squares are pullbacks.
\end{enumerate}
\end{definition}

\begin{proposition}\label{prop:sum}\cite[Proposition~2.6]{CARBONI}

In an extensive category, the following three squares are pullbacks.
In particular, the coprojections are mono:
 		\begin{center}
			\begin{tikzcd}
				A\ar{r}\ar{d} & A\ar{d} & B\ar{r}\ar{d} & B\ar{d} &
				0\ar{r}\ar{d} & B\ar{d} \\
				A\ar{r} & A+B & B\ar{r} & A+B & A\ar{r} & A+B.
			\end{tikzcd}
		\end{center}
\end{proposition}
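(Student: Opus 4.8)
The plan is to extract all three pullback squares from a single use of the ``only if'' direction of condition~(2) in Definition~\ref{def:extensive} (together with its mirror image obtained by swapping $A$ and $B$), by recognizing each target square as one of the two squares attached to a coproduct diagram sitting over the coprojections of $A+B$. So I never need to exhibit a universal property by hand; I only need to arrange a commuting diagram whose top row is already known to be a coproduct, and then read off that the two squares below it are pullbacks.

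The key point is that $0$ is initial, so the coprojection $A\to A+0$ is an isomorphism and
$$
A\xrightarrow{\ \id\ }A\xleftarrow{\ !\ }0
$$
is a coproduct diagram, where $!\colon 0\to A$ is the unique map. First I would place this coproduct on top of the coprojections $A\xrightarrow{i_A}A+B\xleftarrow{i_B}B$, taking the middle vertical morphism to be $f=i_A$, the left verticals to be $\id_A$, and the right verticals to be the unique maps out of $0$. Both squares commute: the left one trivially, and the right one because any two morphisms $0\to A+B$ agree by initiality. Condition~(2) then forces both squares to be pullbacks. The left square is exactly the first square of the Proposition (identities along its top and left, $i_A$ along its right and bottom), and the right square is, up to reflecting across the diagonal, the third one, yielding $0\cong A\times_{A+B}B$. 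Running the same construction with the roles of $A$ and $B$ interchanged produces the second square.

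To conclude that the coprojections are monic, I would invoke the standard characterization that a morphism $g$ is a monomorphism precisely when the square having $g$ along its right and bottom edges and identities along its top and left edges is a pullback (equivalently, the diagonal $X\to X\times_Y X$ is invertible). The first and second squares are of exactly this shape for $g=i_A$ and $g=i_B$, so both coprojections are mono. I expect no serious obstacle here; the only steps I would verify with care are that the chosen top row genuinely is a coproduct with the stated coprojections---that is, that $A\to A+0$ is invertible---and that the right-hand square really commutes. Once these two checks are in place, condition~(2) supplies all three pullbacks directly and the monomorphism statement is immediate, so no further computation is needed.
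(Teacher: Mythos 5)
Your proof is correct and follows essentially the same route as the paper: the paper applies condition~(2) of Definition~\ref{def:extensive} to the coproduct diagram $0\to A_2\leftarrow A_2$ sitting over $A_1\to A_1+A_2\leftarrow A_2$, extracting two of the three squares at once and obtaining the remaining one by the symmetry swapping the summands, which is the mirror image of your choice $A\xrightarrow{\id}A\xleftarrow{!}0$ over $A\to A+B\leftarrow B$. Your additional remarks (that $A+0\cong A$ via the coprojection, and that the first square being a pullback with identities on two sides is exactly the statement that $i_A$ is mono) correctly fill in details the paper leaves implicit.
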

\begin{proof}
This follows from the following commutative diagram 
with rows being coproducts, and the second condition 
of Definition~\ref{def:extensive}:
 \begin{center}
  \begin{tikzcd}
   0\ar{r}\ar{d} & A_2\ar{d} & A_2\ar{l}\ar{d}\\
   A_1\ar{r} & A_1+A_2 & A_2\ar{l}.
  \end{tikzcd}
 \end{center}
\end{proof}

\begin{proposition}\label{prop:strict}\cite[Proposition~2.8]{CARBONI}

In an extensive category, any morphism $A \to 0$ is an isomorphism. 
\end{proposition}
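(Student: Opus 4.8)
The plan is to show that $A$ is itself an initial object; once this is known, the given morphism $A \to 0$ is the unique morphism between two initial objects, and such a morphism is automatically an isomorphism. To produce the initiality of $A$, I would exploit the self-coproduct decomposition $0 \cong 0 + 0$ together with the second axiom of extensivity.

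First I would record two elementary facts about the initial object. Since $\Hom(0+0, X) \cong \Hom(0,X)\times\Hom(0,X)$ is a product of singletons, $0+0$ is again initial, so $0+0 \cong 0$; and since $\Hom(0, 0+0)$ is a singleton, the two coprojections $i_1, i_2 \colon 0 \to 0+0$ coincide. Now given $f \colon A \to 0 \cong 0+0$, the first condition of Definition~\ref{def:extensive} provides the pullbacks of $f$ along $i_1$ and along $i_2$. Because $i_1 = i_2$, these two pullbacks may be taken to be one and the same object $P$, with a single structure morphism $p \colon P \to A$ and a single leg $q \colon P \to 0$.

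Next I would feed the resulting commuting square diagram into the second condition of Definition~\ref{def:extensive}: both squares are pullbacks by construction, so the top row exhibits $A$ as a coproduct $P + P$ in which both coprojections are equal to $p$. The crux is to extract initiality of $P$ from this degenerate coproduct. For any object $X$, the universal property gives a bijection $\Hom(A, X) \cong \Hom(P,X)\times\Hom(P,X)$ sending $k$ to $(kp, kp)$; since every value lies on the diagonal, surjectivity forces the diagonal of $\Hom(P,X)$ to fill the entire square, whence $\#\Hom(P,X) \le 1$. As $q \colon P \to 0$ composed with the unique $0 \to X$ shows $\Hom(P,X)$ is nonempty, we conclude $\#\Hom(P,X) = 1$ for every $X$, i.e.\ $P$ is initial, $P \cong 0$.

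Finally, $A \cong P + P \cong 0 + 0 \cong 0$, so $A$ is initial and the given $f$ is an isomorphism. Note that this argument uses only the two extensivity axioms and requires no local finiteness, consistent with the hypotheses. The step I expect to require the most care is the middle one: arranging the two pullbacks so that they genuinely coincide (using $i_1 = i_2$), and then reading off initiality of $P$ from the collapsed coproduct $P+P$ with equal coprojections. The diagonal-bijection observation is precisely what converts the structural extensivity axiom into the strictness conclusion, and I would isolate it as the heart of the proof.
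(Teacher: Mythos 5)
Your proof is correct and follows essentially the same route as the paper: both exploit the degenerate coproduct $0 \cong 0+0$ with coinciding coprojections together with the second extensivity axiom to exhibit $A$ as a coproduct with equal coprojections, and then collapse the relevant hom-sets. The only difference is that the paper takes the two pullback squares to be the identity squares over the given morphism (so the resulting coproduct is $A \cong A + A$ with both coprojections $\id_A$), avoiding your explicit construction of the pullback $P$ --- which is in any case isomorphic to $A$, since the coprojection $0 \to 0+0$ is an isomorphism.
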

\begin{proof}
Take $\alpha: A\to 0$. In the commutative diagram
 \begin{center}
  \begin{tikzcd}
   A\ar{r}{\id_A}\ar{d}[swap]{\alpha} & A\ar{d}{\alpha}
   & A\ar{l}[swap]{\id_A}\ar{d}{\alpha}\\
   0\ar{r}[swap]{\id_0} & 0 & 0\ar{l}{\id_0},
  \end{tikzcd}
 \end{center}
the bottom row is a direct product, the two squares are pullbacks,
and hence the second condition of Definition~\ref{def:extensive}
shows that the top row is a coproduct. Then, we have $A \stackrel{\id_A}{\to} A$
and $A \stackrel{\alpha}\to 0 \to A$, and by the universality of the top row
as a coproduct implies that these two coincide. Thus 
$A \stackrel{\alpha}\to 0 \to A$ is the identity, and $0 \to A \stackrel{\alpha}{\to} 0$
is the identity, hence $A\cong 0$.
\end{proof}

\begin{lemma}\label{lem:hom-sum}
In an extensive category, if $f:X \to A$ and $g:X \to B$
satisfy $i_A\circ f=i_B\circ g: X \to A+B$, then
$X\cong 0$. 
\end{lemma}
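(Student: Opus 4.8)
The plan is to analyze the single morphism $h := i_A\circ f = i_B\circ g : X\to A+B$ through the extensive structure. By Definition~\ref{def:extensive}(1) the pullbacks $X_A := A\times_{A+B}X$ and $X_B := B\times_{A+B}X$ along the two coprojections exist. Placing these in the shape of diagram~(\ref{eq:extensive}), with $X$ in the middle position and $h$ as the central vertical arrow, makes both squares pullbacks by construction, so Definition~\ref{def:extensive}(2) applies and the top row $X_A\to X\leftarrow X_B$ is a coproduct. Thus $X\cong X_A+X_B$, with the two pullback projections $p_A:X_A\to X$ and $p_B:X_B\to X$ serving as the coprojections.

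Next I would show that each projection is an isomorphism. The factorization $h=i_A\circ f$ means that the pair $(f:X\to A,\ \id_X:X\to X)$ is a cone over the cospan $A\xrightarrow{i_A}A+B\xleftarrow{h}X$ defining $X_A$, and hence induces a map $u:X\to X_A$ with $p_A\circ u=\id_X$; that is, $u$ is a section of $p_A$. Since $i_A$ is a monomorphism by Proposition~\ref{prop:sum}, and monomorphisms are stable under pullback, $p_A$ is a monomorphism; a monomorphism admitting a section is an isomorphism, so $p_A$ is an isomorphism. The symmetric argument using $h=i_B\circ g$ shows that $p_B$ is an isomorphism as well.

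Finally I would combine these facts. Applying Proposition~\ref{prop:sum} to the coproduct $X\cong X_A+X_B$ gives that $0$ is the pullback $X_A\times_X X_B$ of the two coprojections $p_A,p_B$. Since $p_A$ is an isomorphism, pulling $p_B$ back along it leaves the object unchanged, so this pullback is isomorphic to $X_B$; hence $X_B\cong 0$. As $p_B$ is also an isomorphism we have $X_B\cong X$, and therefore $X\cong 0$. The one place demanding care is the second step: one must correctly recognize the cone over the pullback diagram produced by the factorization of $h$ and verify that the induced arrow is genuinely a section of the projection, after which the \emph{mono-plus-section} argument and Proposition~\ref{prop:sum} make the conclusion purely formal.
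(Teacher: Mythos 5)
Your proof is correct, but it takes a noticeably longer route than the paper's. The paper's argument is a two-line affair: by Proposition~\ref{prop:sum} the object $0$ is the pullback of the cospan $A\xrightarrow{i_A}A+B\xleftarrow{i_B}B$, and the hypothesis $i_A\circ f=i_B\circ g$ says precisely that $(f,g)$ is a cone over that cospan, so the universal property yields a morphism $X\to 0$; Proposition~\ref{prop:strict} then forces $X\cong 0$. You instead pull back the composite $h=i_A f=i_B g$ along each coprojection, invoke the extensivity axiom (Definition~\ref{def:extensive}(2)) to decompose $X\cong X_A+X_B$, show each pullback projection is a split epi and a mono (hence an iso), and finally apply Proposition~\ref{prop:sum} to the coproduct $X_A+X_B$ to conclude $X\cong X_B\cong 0$. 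Each step checks out: the cone $(f,\id_X)$ over $A\to A+B\leftarrow X$ does induce a section of $p_A$, monos are pullback-stable, and pulling back along the iso $p_A$ does identify the intersection with $X_B$. What your route buys is that it never needs Proposition~\ref{prop:strict} (strictness of the initial object); what it costs is the full strength of the extensivity axiom to produce the coproduct decomposition, where the paper needs only the two pullback facts already recorded in Propositions~\ref{prop:sum} and~\ref{prop:strict}. Recognizing that the hypothesis itself is a cone over the cospan whose pullback is $0$ would have shortcut the whole argument.
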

\begin{proof}
Proposition~\ref{prop:sum} implies that $f$ and $g$ factor through the
pullback of $A \to A+B \leftarrow B$, which is $0$, hence
the claim follows from Proposition~\ref{prop:strict}. 
\end{proof}

The following proposition is a slight variant of (\cite[5.2.1 Theorem, P.453]{MONOIDALT}).
\footnote{
To be precise, \cite{MONOIDALT} deals with
infinitary extensive categories. This proposition proves that
the notion of connectedness here coincides with that in \cite[\S5.2, P.453]{MONOIDALT}
for infinitary extensive categories, since the contraposition of this
proposition gives the equivalent condition (vi) to the connectedness 
in \cite[5.2.1 Theorem, P.453]{MONOIDALT}.} 
\begin{proposition} 
\label{prop:connectivity}
Let $X$ be an object of an extensive category $\cE$. 
The following are equivalent.
\begin{enumerate}
 \item $X$ is not connected. \label{enum:former}
 \item $X \cong 0$, or, there are two objects 
$U\not\cong 0, \ V\not\cong 0$ such that $U+V \cong X$. \label{enum:latter}
\end{enumerate}
\end{proposition}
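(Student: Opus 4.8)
The plan is to prove the two implications separately, relying on three facts already established: the coprojections $i_A,i_B$ are monic (Proposition~\ref{prop:sum}); a coincidence $i_A\circ f=i_B\circ g$ forces the source to be $0$ (Lemma~\ref{lem:hom-sum}); and the third square of Proposition~\ref{prop:sum} exhibits $0$ as the pullback of $i_A$ along $i_B$. The organizing observation is that for $X\not\cong 0$ the canonical map $\Hom(X,A)\coprod\Hom(X,B)\to\Hom(X,A+B)$, given on components by $f\mapsto i_A\circ f$ and $g\mapsto i_B\circ g$, is automatically injective: each component is injective because $i_A,i_B$ are monic, and the two images are disjoint since an equality $i_A\circ f=i_B\circ g$ would give $X\cong 0$ by Lemma~\ref{lem:hom-sum}. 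Hence, for $X\not\cong 0$, non-connectedness is precisely the failure of this map to be surjective for some pair $(A,B)$.

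For the implication (\ref{enum:latter})$\Rightarrow$(\ref{enum:former}) I would treat the two cases of (\ref{enum:latter}). If $X\cong 0$, then $\Hom(0,A)$, $\Hom(0,B)$, and $\Hom(0,A+B)$ are each singletons, so the canonical map sends a two-element set to a one-element set and cannot be bijective; thus $0$ is not connected. If $X\cong U+V$ with $U\not\cong 0$ and $V\not\cong 0$, I would take $A=U$ and $B=V$ and show the chosen isomorphism $\phi\colon X\to U+V$ lies in neither image. Were $\phi=i_U\circ f$, then $i_U$ would be a split epimorphism; being monic as well, it would be an isomorphism, and then in the third square of Proposition~\ref{prop:sum} the projection $0\to V$, being a pullback of the isomorphism $i_U$ along $i_V$, would itself be an isomorphism, forcing $V\cong 0$, a contradiction. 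The case $\phi=i_V\circ g$ is symmetric, so $\phi$ is not in the image and $X$ is not connected.

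For the implication (\ref{enum:former})$\Rightarrow$(\ref{enum:latter}), assume $X$ is not connected; if $X\cong 0$ there is nothing to prove, so suppose $X\not\cong 0$. By the organizing observation the canonical map fails to be surjective for some $(A,B)$, so there is a morphism $h\colon X\to A+B$ factoring through neither $i_A$ nor $i_B$. Using the first condition of Definition~\ref{def:extensive} I form the pullbacks $X_1$ and $X_2$ of $h$ along $i_A$ and $i_B$; the second condition of Definition~\ref{def:extensive} then makes the top row of the resulting diagram a coproduct, so $X\cong X_1+X_2$. It remains to see that neither summand is $0$. If $X_2\cong 0$, then the coprojection $X_1\to X$ is an isomorphism by the unit law for coproducts, and the pullback square defining $X_1$ exhibits $h$ as factoring through $i_A$, a contradiction; symmetrically, $X_1\cong 0$ would make $h$ factor through $i_B$. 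Hence $U:=X_1$ and $V:=X_2$ are both non-zero with $U+V\cong X$, which is alternative (\ref{enum:latter}).

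The routine parts are the cardinality count and the split-epimorphism argument; the real leverage comes from the second condition of Definition~\ref{def:extensive}, which converts the purely set-theoretic statement that $h$ does not factor through a coprojection into the structural decomposition $X\cong X_1+X_2$. The step I expect to require the most care is confirming that both pullback summands are non-zero, that is, matching the non-factoring of $h$ with the non-vanishing of $X_1$ and $X_2$; once the injectivity observation and the coproduct decomposition are in place this is short, but it is the hinge on which the equivalence turns.
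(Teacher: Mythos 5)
Your proof is correct and follows essentially the same route as the paper: the same split-epi-plus-mono argument (via the third square of Proposition~\ref{prop:sum}) for the direction (\ref{enum:latter})$\Rightarrow$(\ref{enum:former}), and the same pullback decomposition $X\cong X_1+X_2$ via both conditions of Definition~\ref{def:extensive}, with Lemma~\ref{lem:hom-sum} ruling out the degenerate cases. The only difference is organizational: you establish up front that the comparison map is injective whenever $X\not\cong 0$, whereas the paper case-splits on failure of injectivity versus surjectivity; the content is identical.
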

\begin{proof}
Suppose (\ref{enum:latter}). If $X\cong 0$, then 
$$
\Hom(0,A)\coprod \Hom(0,B) \to \Hom(0, A+B)
$$ 
is not injective, hence by Definition~\ref{def:conn}, $X$ is not
connected. If $X\cong U+V$, in 
$$
\Hom(U+V,U)\coprod \Hom(U+V,V) \to \Hom(U+V, U+V),
$$ 
we shall show that 
$\id_{U+V}$ in the right-hand side does not come from the left-hand side,
hence $U+V$ is not connected.
If it does, we may assume that it comes from $\Hom(U+V,V)$, i.e.,
$U+V \to V \stackrel{i_U}{\to} U+V$ is an identity.
Then $i_U$ is a split epi, and mono by Proposition~\ref{prop:sum},
hence is an isomorphism. This implies that in the right most diagram 
of Proposition~\ref{prop:sum}, the bottom arrow is an isomorphism, 
and so is the top, which implies $V\cong 0$, contradicting (\ref{enum:latter}).
Thus, (\ref{enum:latter}) implies (\ref{enum:former}).

Suppose (\ref{enum:former}). (\ref{eq:conn}) for $C=X$ in Definition~\ref{def:conn}
is not bijective. Suppose that it is not injective.
Since coprojections are mono, this implies that
there are $X \to A$ and $X \to B$, which give one same morphism
$X \to A+B$ after composing with coprojections. 
By Lemma~\ref{lem:hom-sum}, $X\cong 0$, which proves the claim
in this case. Suppose that 
(\ref{def:conn}) is not surjective. Take an $f:X \to A+B$
which does not come from the left. We take pullbacks
       \begin{center}
		\begin{tikzcd}
			X_A \ar{r}\ar{d} & X\ar{d}{f}& X_B\ar{l}\ar{d}\\
			A  \ar{r}[swap]{i_{A}} & A+B & B\ar{l}{i_{B}}.
		\end{tikzcd}
       \end{center}
By definition
of an extensive category, we have $X\cong X_A + X_B$. Suppose 
that $X_A\cong 0$. Then, $X\cong 0+X_B\cong X_B$ implies that
$f$ is in the image of the composition $X \to X_B \to B$ in $\Hom(X,B)$,
which contradicts the assumption that $f$ is not in the image.
Thus, $X_A\not\cong 0$. Similarly, $X_B\not\cong 0$, and hence
(\ref{enum:latter}) holds.
\end{proof}

\subsection{Decomposition to connected objects}
This section proves (\ref{enum:main2}) and (\ref{enum:main3}) in Theorem~\ref{th:main}.
\begin{proposition}\label{finite_sum_decomposition}
Suppose that an extensive category $\cE$
is locally finite. Then, every object $X$ is
a coproduct of a finite number of connected objects.
\end{proposition}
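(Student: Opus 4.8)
The plan is to induct on the cardinality of a fixed "probe" set of morphisms, using the fact that an object either is already connected or splits off a nontrivial summand, together with local finiteness to guarantee termination.

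The plan is to choose a coproduct decomposition of $X$ into nonzero objects whose length is as large as possible, and then to argue that each summand of a longest such decomposition is forced to be connected. All the real content is in showing that a longest decomposition exists, i.e. that there is a finite upper bound on the number of nonzero summands appearing in any coproduct decomposition of $X$; once this bound is available, Proposition~\ref{prop:connectivity} closes the argument almost immediately.

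First I would establish the bounding step, which is the heart of the proof. Fix $X$ and consider its \emph{complemented subobjects}: isomorphism classes over $X$ of coprojections $i_U\colon U\to X$ coming from some decomposition $X\cong U+V$. To each such $U$ I attach the morphism
\[
\chi_U=[\,i_1\circ i_U,\ i_2\circ i_V\,]\colon X=U+V\longrightarrow X+X,
\]
where $i_1,i_2\colon X\to X+X$ are the two coprojections. The key claim is that $U$ is recovered from $\chi_U$ as the pullback of $\chi_U$ along $i_1$. Indeed $\chi_U\circ i_U=i_1\circ i_U$ and $\chi_U\circ i_V=i_2\circ i_V$, so the coproduct diagram $U\to X\leftarrow V$ sits over the coprojections $i_1,i_2$ of $X+X$ with both squares commuting; the second axiom of Definition~\ref{def:extensive} then forces both squares to be pullbacks. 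Hence $\chi_U$ determines the subobject $i_U$, so $U\mapsto\chi_U$ is injective on complemented subobjects. Since $\cC$ is locally finite, $\Hom(X,X+X)$ is a finite set, and therefore $X$ has only finitely many complemented subobjects.

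Next I would convert this into a length bound and conclude. Given any decomposition $X\cong C_1+\cdots+C_n$ with every $C_i\not\cong 0$, each subset $S\subseteq\{1,\dots,n\}$ yields a complemented subobject $U_S=\sum_{i\in S}C_i$. These $2^n$ subobjects are pairwise distinct: pulling back $i_{U_S}$ along the coprojection $C_j\to X$ gives $C_j$ when $j\in S$ and, by the disjointness of coprojections in Proposition~\ref{prop:sum} together with Proposition~\ref{prop:strict}, gives $0$ when $j\notin S$; as each $C_j\not\cong 0$, this recovers $S$ from $U_S$. Thus $2^n\le\#\Hom(X,X+X)$, bounding $n$. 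If $X\cong 0$ it is the empty coproduct of connected objects; otherwise choose a decomposition $X\cong C_1+\cdots+C_n$ into nonzero objects with $n$ maximal, which exists by the bound. Were some $C_i$ not connected, then by Proposition~\ref{prop:connectivity} and $C_i\not\cong 0$ we could split $C_i\cong U+V$ with $U,V\not\cong 0$, producing a decomposition of $X$ into $n+1$ nonzero pieces and contradicting maximality. Hence every $C_i$ is connected.

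I expect the main obstacle to be the recovery-by-pullback claim of the second paragraph: this is precisely where extensivity is indispensable, and one must check carefully that the two commuting squares built from $\chi_U$ really have the shape demanded by Definition~\ref{def:extensive} (top row a coproduct, bottom row the coprojections of $X+X$), so that the relevant direction of the axiom applies and returns $U$ as a canonical pullback. By comparison, the distinctness bookkeeping for the $2^n$ subobjects is routine once Propositions~\ref{prop:sum} and~\ref{prop:strict} are invoked.
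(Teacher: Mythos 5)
Your proof is correct, and its overall skeleton coincides with the paper's: derive the bound $2^n\le\#\Hom(X,X+X)$ for any decomposition of $X$ into $n$ summands $\not\cong 0$, then invoke Proposition~\ref{prop:connectivity} to keep splitting until every piece is connected (the paper runs this as an iteration that must terminate because $n$ is bounded; you phrase it as picking a decomposition of maximal length — the two are equivalent). Where you genuinely diverge is in how the bound is obtained. The paper gets it in one line from the universal property of the coproduct, $\#\Hom(X,X+X)=\prod_{k}\#\Hom(X_k,X+X)$, together with the observation that each factor has at least two elements since $i_1 i_{X_k}\neq i_2 i_{X_k}$ by Lemma~\ref{lem:hom-sum}. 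You instead inject the $2^n$ complemented subobjects $U_S$ into $\Hom(X,X+X)$ via characteristic morphisms $\chi_U$, recovering $U$ as a pullback through the second axiom of Definition~\ref{def:extensive}. This works — your two squares do have exactly the shape the axiom demands, and the distinctness of the $U_S$ follows from Propositions~\ref{prop:sum} and~\ref{prop:strict} as you indicate — but it carries extra bookkeeping the paper avoids: you must check that $\chi_U$ determines the subobject class independently of the chosen complement $V$, and that the $U_S$ are pairwise non-isomorphic over $X$. In exchange, your argument establishes something slightly stronger and of independent interest, namely that in a locally finite extensive category every object has only finitely many complemented subobjects, from which the length bound is an immediate corollary. (Incidentally, the opening sentence about ``inducting on the cardinality of a probe set'' does not describe what you actually do and could simply be dropped.)
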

\begin{proof}
Suppose that 
$X\cong X_1+X_2+\cdots+X_n$ with $X_k\not\cong 0$.
Let $i_1, i_2$ be the coprojections from $X$ to $X+X$ (to the left component,
to the right component, respectively). 
By Lemma~\ref{lem:hom-sum},
$i_1i_{X_k}\neq i_2i_{X_k}$ follows, and hence
\begin{align}
  \#\Hom(X_k,X+X)\geq 2.
\end{align}
Thus		
\begin{align}
 \#\Hom(X,X+X)&=\prod^n_{i=1}\#\Hom(X_i,X+X)\geq 2^n.
\end{align}
By locally finiteness, $2^n$ is bounded above, and so is $n$.
If $X$ is connected, then it is a coproduct of one connected object.
If $X$ is not connected, then 
Proposition~\ref{prop:connectivity}
implies that $X\cong 0$ or $X \cong U,V$, $U\neq 0, V\neq 0$.
In the former case, $X$ is a coproduct of zero of connected objects,
hence the claim holds. In the latter case, if both $U$ and $V$ are
connected, then the claim holds. Otherwise, by the same procedure,
we may decompose $U$ or $V$ into a coproduct of two non-initial objects.
This procedure stops after a finite iteration, since the number $n$ is bounded above. 
Thus, we have shown that $X$ is a coproduct of a finite number of connected objects.
\end{proof}
This proves (\ref{enum:main2}) in Theorem~\ref{th:main}. 
To show the uniqueness (\ref{enum:main3}), 
we prepare some lemmas.
\begin{lemma}\label{lem:decomp}
Let $A, B, X$, and $C$ be objects of an extensive category,
and assume $C$ connected. 
Suppose that $f:C+X \to A+B$ is an isomorphism. By connectedness of $C$,
we may assume that there is a $g:C \to A$ such that
$fi_C=i_Ag$ (by symmetry between $A$ and $B$). Then, there exists
an object $Y$ such that 
 \begin{align}
  A\cong C+Y,\ X\cong Y+B.
 \end{align}
\end{lemma}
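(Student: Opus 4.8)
The plan is to transport the coproduct decomposition $A+B$ back along the isomorphism $f$ and then refine the two resulting coproduct structures on $Z:=C+X$ against each other by a square of pullbacks. Writing $p:=f^{-1}i_A\colon A\to Z$ and $q:=f^{-1}i_B\colon B\to Z$, the diagram $A\xrightarrow{p}Z\xleftarrow{q}B$ is again a coproduct, being the image of the coproduct $A+B$ under the isomorphism $f^{-1}$. Hence $p,q$ are mono and the pullback $A\times_{Z}B\cong 0$, both by Proposition~\ref{prop:sum}. The hypothesis $fi_C=i_Ag$ rewrites as $i_C=pg$, so the original coprojection $i_C\colon C\to Z$ factors through $p$ via $g$.

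Next I would form the four pullbacks of $\{i_C,i_X\}$ against $\{p,q\}$ over $Z$; these all exist by the first axiom of Definition~\ref{def:extensive}. Applying the second axiom to $p\colon A\to Z=C+X$ (with bottom row the coproduct $C\xrightarrow{i_C}Z\xleftarrow{i_X}X$) shows $A\cong (C\times_{Z}A)+(X\times_{Z}A)$, and applying it to $q\colon B\to Z$ shows $B\cong (C\times_{Z}B)+(X\times_{Z}B)$. Applying the same axiom to $i_X\colon X\to Z=A+B$, where now the bottom row is the transported coproduct $A\xrightarrow{p}Z\xleftarrow{q}B$, shows $X\cong (A\times_{Z}X)+(B\times_{Z}X)$. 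I would then set $Y:=X\times_{Z}A$ and show that this single object fits all three identities.

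The decisive computations are the two corners involving $C$. Since $i_C=pg$ with $p$ mono, the pullback $C\times_{Z}A$ is $C$ itself, with legs $\id_C$ and $g$. On the other hand $C\times_{Z}B$ admits a morphism to $A\times_{Z}B$ induced by $g$, and $A\times_{Z}B\cong 0$, so $C\times_{Z}B\cong 0$ by strictness (Proposition~\ref{prop:strict}). Feeding these into the decompositions above yields $A\cong C+Y$, and $B\cong 0+(X\times_{Z}B)\cong X\times_{Z}B$; combining the latter with $X\cong (A\times_{Z}X)+(B\times_{Z}X)\cong Y+(X\times_{Z}B)$ gives $X\cong Y+B$, which is exactly the required pair of isomorphisms.

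I expect the main difficulty to be bookkeeping rather than conceptual. One must check that the cocone $(Z;p,q)$ transported by $f^{-1}$ is genuinely a coproduct diagram, so that the second axiom of extensivity applies with this bottom row, and that the object $Y=X\times_{Z}A$ appearing in the splitting of $A$ is literally the same corner of the pullback square appearing in the splitting of $X$ (using symmetry $A\times_{Z}X\cong X\times_{Z}A$). Both are automatic once the single square of pullbacks over $Z$ is fixed and each corner is defined once and reused. The only genuine input beyond the two extensivity axioms is the disjointness $A\times_{Z}B\cong 0$ together with strictness, which is precisely what forces the off-diagonal corner $C\times_{Z}B$ to vanish and thereby isolates $C$ inside $A$.
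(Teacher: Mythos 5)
Your proposal is correct and follows essentially the same route as the paper: both form the square of pullbacks of $\{i_C,i_X\}$ against $\{f^{-1}i_A,f^{-1}i_B\}$ over $C+X$, identify the corner over $(C,A)$ with $C$ (via $i_C=f^{-1}i_A\circ g$ and monicity), kill the corner over $(C,B)$ using disjointness of the coproduct plus strictness, and read off $A\cong C+Y$ and $X\cong Y+B$ from the extensivity axiom. The only cosmetic difference is that the paper invokes Lemma~\ref{lem:hom-sum} to get the vanishing corner, which is the same disjointness-plus-strictness argument you spell out directly.
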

\begin{proof}
By $fi_C=i_Ag$ we have a commutative diagram
 \begin{center}
  \begin{tikzcd}
   C\ar{r}{g}\ar{d}[swap]{\id_C} & A\ar{d}{f^{-1}i_A}\\
   C\ar{r}[swap]{i_C} & C+X.
  \end{tikzcd}
 \end{center}
Since $f^{-1}i_A$ is mono and the left vertical arrow is an isomorphism,
this diagram is a pullback.
We consider the following diagram:
 \begin{center}
  \begin{tikzcd}
   C\ar{r}{g}\ar{d}{\id_C} & A\ar{d}{f^{-1}i_A} & Y\ar{l}\ar{d}\\
   C\ar{r}[swap]{i_C} & {C+X(\overset{f}{\cong} A+B)} & X\ar{l}{i_X}\\
   Z'\ar{r}\ar{u}  & B\ar{u}{f^{-1}i_B} & Z\ar{l}\ar{u}.
  \end{tikzcd}
 \end{center}
The left top square is the observed pullback. The right top $Y$ is
defined by the pullback. By the axiom of extensive categories,
we have 
$$A \cong C+Y.$$
The bottom row is the pullback along $f^{-1}i_B$, giving $Z'$ and $Z$.
At the left bottom square, $fi_C=i_A g$ implies that $Z'$ is the 
pullback of $C\stackrel{g}{\to} A \stackrel{i_A}\to A+B$
and $B \stackrel{i_B}{\to} A+B$, and is $0$ by Lemma~\ref{lem:hom-sum}. 
Thus $B \cong Z'+Z\cong Z$ holds. The extensivity gives
$$
X\cong Y+Z \cong Y+B.
$$
\end{proof}
A connected object is cancellable.
\begin{lemma}\label{lem:cancel}
For objects $X,X'$ and a connected object $C$ in an extensive category, $C+X \cong C+X'$
implies $X \cong X'$. 
\end{lemma}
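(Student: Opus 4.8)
The plan is to deduce cancellation directly from the decomposition Lemma~\ref{lem:decomp}. Fix an isomorphism $f:C+X \to C+X'$ and regard the target as a coproduct $A+B$ with $\{A,B\}=\{C,X'\}$. Since $C$ is connected, the composite $fi_C:C\to C+X'$ lies in the image of exactly one coprojection (the map $\Hom(C,C)\coprod\Hom(C,X')\to\Hom(C,C+X')$ being bijective), so after relabelling we may assume $fi_C=i_A g$ for some $g:C\to A$, which is precisely the hypothesis needed to invoke Lemma~\ref{lem:decomp}. The lemma then produces an object $Y$ with $A\cong C+Y$ and $X\cong Y+B$. The argument splits according to whether the distinguished summand $A$ is $X'$ or $C$.

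If $A=X'$ and $B=C$, then Lemma~\ref{lem:decomp} gives $X\cong Y+C\cong C+Y\cong X'$, and we are done immediately.

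The remaining case, $A=C$ and $B=X'$, requires an extra idea and is the main obstacle. Here the lemma only yields $C\cong C+Y$ together with $X\cong Y+X'$, which does not visibly give $X\cong X'$. To finish I would argue that $Y\cong 0$: since $C$ is connected it is in particular not isomorphic to $0$ (by Proposition~\ref{prop:connectivity}), so if $Y\not\cong 0$ then $C\cong C+Y$ would exhibit a connected object as a coproduct of two non-initial objects, contradicting the same proposition. Hence $Y\cong 0$, and $X\cong Y+X'\cong 0+X'\cong X'$.

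Combining the two cases gives $X\cong X'$ in all situations. The only subtle point is that connectedness is used twice: once to factor $fi_C$ through a single coprojection so that Lemma~\ref{lem:decomp} applies, and once, via Proposition~\ref{prop:connectivity}, to rule out a nontrivial self-absorbing summand in $C\cong C+Y$; everything else is a direct bookkeeping of the isomorphisms the lemma outputs.
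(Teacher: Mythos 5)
Your proof is correct and follows essentially the same route as the paper: split into the two cases according to which coprojection $fi_C$ factors through, apply Lemma~\ref{lem:decomp} in each, and in the case $A=C$ use Proposition~\ref{prop:connectivity} to force $Y\cong 0$ from $C\cong C+Y$. The paper's proof is the same argument with the two cases in the opposite order.
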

\begin{proof}
Let $f:C+X \to C+X'$ be an isomorphism. By the connectedness of $C$,
either one of the following holds.
\begin{enumerate}
 \item $fi_C=i_Cg$ holds for some $g:C \to C$.
 \item $fi_C=i_{X'}g$ holds for some $g:C \to X'$.
\end{enumerate}
In the first case, by Lemma~\ref{lem:decomp} for $A=C$ and $B=X'$, we have
$$
C\cong C+Y \mbox{ and } X \cong Y+X',
$$
where the connectedness of $C$ and
Proposition~\ref{prop:connectivity} imply that $Y\cong 0$, thus $X\cong X'$.
In the second case, by Lemma~\ref{lem:decomp} for $A=X'$ and $B=C$, we have
$$
A=X'\cong C+Y \mbox{ and } X \cong Y+B=Y+C,
$$
and hence $X'\cong X$.
\end{proof}
The following is an analogue to the unique factorization theorem,
which proves (\ref{enum:main3}) of Theorem~\ref{th:main}.
\begin{corollary}\label{cor:UFD}
In an extensive category, suppose that an object is decomposed
in two ways as 
 \begin{align}
  f:\sum_{i=1}^sC_i \overset{\sim}{\to} \sum_{i=1}^tD_i,
 \end{align}
for connected objects $C_1,\ldots,C_s$ and
$D_1,\ldots,D_t$.
Then $s=t$, and by changing the ordering, $C_i\cong D_i$ for $i=1,\ldots,s$.
\end{corollary}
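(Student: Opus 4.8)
The plan is to induct on $s$, stripping off the connected summand $C_1$ and matching it against one of the $D_j$ by means of Lemma~\ref{lem:decomp}.

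For the base case $s = 0$, the left-hand side is the empty coproduct $0$, so $\sum_{j=1}^t D_j \cong 0$. If $t \geq 1$, the coprojection $i_{D_1}$ composed with this isomorphism is a morphism $D_1 \to 0$, hence an isomorphism by Proposition~\ref{prop:strict}; then $D_1 \cong 0$ is not connected by Proposition~\ref{prop:connectivity}, a contradiction. Thus $t = 0$ and the conclusion is vacuous.

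For the inductive step I would assume $s \geq 1$ and that the statement holds for every decomposition whose left-hand side has $s - 1$ connected summands (with an arbitrary number of right-hand summands). First one checks $t \geq 1$: otherwise $\sum_{j} D_j \cong 0$ and the composite $f \circ i_{C_1} \colon C_1 \to 0$ would force $C_1 \cong 0$, again contradicting connectedness. Since the bijection of Definition~\ref{def:conn} extends from binary to finite coproducts by iteration, connectedness of $C_1$ guarantees that $f \circ i_{C_1}$ factors through exactly one coprojection: there exist an index $k$ and $g \colon C_1 \to D_k$ with $f \circ i_{C_1} = i_{D_k} \circ g$. Rewriting the target as $D_k + \bigl(\sum_{j \neq k} D_j\bigr)$ and setting $X = \sum_{i=2}^s C_i$, I would apply Lemma~\ref{lem:decomp} with $C = C_1$, $A = D_k$, and $B = \sum_{j \neq k} D_j$, obtaining an object $Y$ with $D_k \cong C_1 + Y$ and $X \cong Y + B$.

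The decisive point is that $D_k$ is itself connected while $C_1 \not\cong 0$; Proposition~\ref{prop:connectivity} then forces $Y \cong 0$ in the decomposition $D_k \cong C_1 + Y$. Hence $C_1 \cong D_k$ and $\sum_{i=2}^s C_i \cong \sum_{j \neq k} D_j$, an isomorphism between coproducts of $s-1$ and $t-1$ connected objects. The induction hypothesis yields $s - 1 = t - 1$, so $s = t$, together with a reordering matching $C_i \cong D_j$ for $i \geq 2$; adjoining $C_1 \cong D_k$ completes the induction. The only step requiring care—rather than any genuine difficulty—is verifying that connectedness of $C_1$ produces a factorization through a single $D_k$, and that connectedness of $D_k$ collapses the residual summand $Y$ to $0$; the cancellation phenomenon recorded in Lemma~\ref{lem:cancel} is exactly what makes this bookkeeping go through.
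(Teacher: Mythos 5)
Your proposal is correct and follows essentially the same route as the paper's proof: match $C_1$ with some $D_k$ via connectedness, apply Lemma~\ref{lem:decomp}, use Proposition~\ref{prop:connectivity} to force $Y\cong 0$, and recurse. The only cosmetic difference is that you read off $\sum_{i=2}^s C_i\cong\sum_{j\neq k}D_j$ directly from the second conclusion of Lemma~\ref{lem:decomp}, whereas the paper invokes the cancellation Lemma~\ref{lem:cancel} for that step.
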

\begin{proof}
Suppose that $s=0$. If $t\geq 1$, then there is a morphism $D_1 \to 0$,
which is impossible by connectivity of $D_1$ and by Proposition~\ref{prop:strict}.
Thus $t=0$, and the claim follows.
Hence, we may assume that $s,t\geq 1$.
By connectivity of $C_1$, there exists a $k$ and $g:C_1 \to D_k$
such that $fi_{C_1}=i_{D_k}g$. By Lemma~\ref{lem:decomp}, we have a $Y$
such that
$$
C_1 \cong D_k+Y.
$$
By Proposition~\ref{prop:connectivity}, $Y\cong 0$ and $C_1 \cong D_k$.
By changing the ordering, we may assume $C_1 \cong D_1$.
By Lemma~\ref{lem:cancel}, we have
 \begin{align}
  \sum_{i=2}^sC_i\cong\sum_{i=2}^tD_i.
 \end{align}
By iterating the argument, we will have $s=0$ or $t=0$.
Then, the argument at the beginning of this proof shows that
$s=t$, and $C_i\cong D_i$
for $i=1,\ldots,s$.
\end{proof}
This completes the proofs of 
(\ref{enum:main2}) and (\ref{enum:main3}) in Theorem~\ref{th:main}.

\subsection{Decomposability implies extensivity}
This section proves (\ref{enum:main1}) in Theorem~\ref{th:main}.
Throughout this section, we assume that $\cC$ is a category whose object is 
a finite coproduct of connected objects. We shall prove that $\cC$
is extensive. Every argument is about $\cC$. 
\begin{lemma}\label{lem:copro-mono} 
A coprojection
$$
X_1 \stackrel{i_{X_1}}{\to} X_1+X_2
$$
is mono.
\end{lemma}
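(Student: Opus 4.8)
The plan is to reduce the statement to the case where the source of the two parallel maps being tested is a connected object, and then to read off monomorphism directly from the definition of connectedness. Recall that $i_{X_1}$ is mono precisely when, for every object $T$ and every pair $u,v\colon T\to X_1$ with $i_{X_1}u=i_{X_1}v$, one has $u=v$. The crucial observation is that it suffices to verify this for connected $T$: under the standing hypothesis of this section every object is a finite coproduct of connected objects, and a morphism out of a coproduct is determined by its composites with the coprojections, so monomorphism against each connected summand propagates to monomorphism against the whole coproduct.

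First I would carry out this reduction. Given an arbitrary test object $T$ with $u,v\colon T\to X_1$ satisfying $i_{X_1}u=i_{X_1}v$, fix an isomorphism $\phi\colon C_1+\cdots+C_n\to T$ with each $C_j$ connected, and let $i_{C_j}\colon C_j\to C_1+\cdots+C_n$ be the coprojections. Composing the hypothesis with $\phi\,i_{C_j}$ yields
$$
i_{X_1}\circ(u\phi\,i_{C_j})=i_{X_1}\circ(v\phi\,i_{C_j})
$$
for each $j$. Once we know that $i_{X_1}$ is monic against each connected $C_j$, this gives $u\phi\,i_{C_j}=v\phi\,i_{C_j}$ for all $j$; the universal property of the coproduct then forces $u\phi=v\phi$, and since $\phi$ is an isomorphism we conclude $u=v$. (The empty case $n=0$, i.e.\ $T\cong 0$, is trivial, as $\Hom(0,X_1)$ is a singleton.)

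It remains to treat a connected source $C$, and here the conclusion is immediate from Definition~\ref{def:conn}. The natural map
$$
\Hom(C,X_1)\coprod \Hom(C,X_2)\to \Hom(C,X_1+X_2)
$$
is bijective, so in particular its restriction to the summand $\Hom(C,X_1)$, namely $w\mapsto i_{X_1}\circ w$, is injective. This is exactly the assertion that $i_{X_1}u=i_{X_1}v$ implies $u=v$ for $u,v\colon C\to X_1$, completing the argument.

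I expect no serious obstacle: only the injectivity half of the connectedness bijection is used, and the entire content is the bookkeeping of the reduction to connected test objects via the decomposition hypothesis. The one point demanding care is that $T$ is merely \emph{isomorphic} to a coproduct of connected objects, so one must transport $u$ and $v$ along the chosen isomorphism $\phi$ before restricting to components; this is harmless because isomorphisms are both mono and epi.
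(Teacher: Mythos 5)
Your proof is correct and follows essentially the same route as the paper: reduce to a connected test object using the decomposition hypothesis and the universal property of the coproduct, then invoke the injectivity half of the bijection in Definition~\ref{def:conn}. The only difference is that you spell out the reduction (via the isomorphism $\phi$ and the component coprojections) in more detail than the paper does.
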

\begin{proof}
We consider $D \rightrightarrows X_1 \to X_1+X_2$.  
To check the mono property, note that $D\cong \sum_{i}D_i$ with $D_i$
connected, and since
$$
\Hom(D, -)\cong \prod_{i}\Hom(D_i,-),
$$
it suffices to check the mono-property for each $D_i$. 
In other words, we may assume that $D$ is connected, and it suffices 
to show that if 
$f,g:D \to X_1$ satisfy $i_{X_1}f=i_{X_1}g$, then $f=g$.
By connectivity of $D$,
$$
\Hom(D,X_1) \to \Hom(D,X_1+X_2), \quad f \mapsto i_{X_1}f
$$
is injective. Thus $f=g$ follows.
\end{proof}

\begin{lemma}\label{lem:construction}
Let $A, X$ and $Y$ be objects, and
$f:A \to X+Y$ be a morphism. 
We may assume 
$$
A\cong \sum_{j\in J} A_j,
$$
with $A_j$ connected and $J$ a finite set. 
Then, by Definition~\ref{def:conn},
for each $j$, either one of the 
following two holds.
\begin{enumerate}
 \item \label{enum:X} 
 There exists $g_j:A_j \to X$ 
 such that $f\circ i_{A_j}=i_X\circ g_j$ holds.
 \item \label{enum:Y} 
 There exists $g_j:A_j \to Y$ 
 such that $f\circ i_{A_j}=i_Y \circ g_j$ holds.
\end{enumerate}
Let $J_X$ be the set of $j$ satisfying (\ref{enum:X}),
and $J_Y$ the set of $j$ satisfying (\ref{enum:Y}).
Thus, $J_X\coprod J_Y=J$.
\end{lemma}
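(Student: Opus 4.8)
The plan is to reduce the statement to a componentwise application of the definition of connectedness (Definition~\ref{def:conn}). First I would invoke the standing hypothesis of this section to fix an isomorphism $A \cong \sum_{j\in J} A_j$ with every $A_j$ connected and $J$ finite. Transporting $f$ along this isomorphism, I may treat $f$ as a morphism $\sum_{j\in J} A_j \to X+Y$, so that for each $j$ the composite $f\circ i_{A_j}$ is a well-defined element of $\Hom(A_j, X+Y)$.

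The heart of the argument is to apply Definition~\ref{def:conn} with $C=A_j$ and with the two objects there instantiated as $X$ and $Y$: the connectedness of $A_j$ says exactly that
$$
\Hom(A_j, X)\coprod \Hom(A_j, Y) \to \Hom(A_j, X+Y)
$$
is a bijection. Surjectivity provides a preimage of $f\circ i_{A_j}$; since the source is a disjoint union, this preimage lies either in the $X$-copy, yielding $g_j:A_j\to X$ with $f\circ i_{A_j}=i_X\circ g_j$ (case~(\ref{enum:X})), or in the $Y$-copy, yielding case~(\ref{enum:Y}). Hence every $j$ belongs to $J_X\cup J_Y$.

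To obtain the disjoint decomposition $J=J_X\coprod J_Y$ rather than just a covering, I would use the injectivity half of the same bijection. If some $j$ lay in both $J_X$ and $J_Y$, then the corresponding witnesses $g_j:A_j\to X$ and $g_j':A_j\to Y$ would be two distinct elements of $\Hom(A_j,X)\coprod\Hom(A_j,Y)$---distinct because they sit in different copies of the disjoint union---both mapping to $f\circ i_{A_j}$, contradicting injectivity. Therefore $J_X\cap J_Y=\emptyset$, and the claimed partition follows.

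I do not expect a serious obstacle: the whole content is the unwinding of Definition~\ref{def:conn} applied to each connected summand, made possible by the fact that $\Hom(A_j,-)$ is insensitive to the other summands of $A$. The only point that requires any care---and the closest thing to a subtlety---is that the disjointness of $J_X$ and $J_Y$ relies on the injectivity, not merely the surjectivity, of the map in~(\ref{eq:conn}).
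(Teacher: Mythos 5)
Your proof is correct and is exactly the argument the paper intends: the paper states the lemma as an immediate consequence of Definition~\ref{def:conn} (with no separate proof), and your write-up simply unwinds that definition, using surjectivity of the map in~(\ref{eq:conn}) for the existence of $g_j$ and injectivity for the disjointness $J_X\cap J_Y=\emptyset$. Nothing is missing.
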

\begin{proposition}\label{prop:AXAY}
Let $A, X$ and $Y$ be objects, and
$f:A \to X+Y$ be a morphism, as above. 
Define
$$
A_X:=\sum_{j\in J_X}A_j \mbox{ and } A_Y:=\sum_{j\in J_Y}A_j.
$$
Then, $A\cong A_X + A_Y$ holds, and the following diagram 
commutes:
 \begin{equation}\label{eq:ext-to-prove}
  \begin{tikzcd}
   A_X\ar{r}{i_{A_X}}\ar{d}[swap]{g_X} & A\ar{d}{f}& A_Y\ar{l}[swap]{i_{A_Y}}\ar{d}{g_Y}\\
   X\ar{r}[swap]{i_X} & X+Y    & Y \ar{l}{i_Y}.
  \end{tikzcd}
 \end{equation}
\end{proposition}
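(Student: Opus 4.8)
The plan is to construct the vertical maps $g_X$ and $g_Y$ from the componentwise data $g_j$ of Lemma~\ref{lem:construction} via the universal property of the coproduct, to obtain the decomposition $A \cong A_X + A_Y$ by regrouping the connected summands according to the partition $J = J_X \sqcup J_Y$, and then to verify the two commutativity relations $f\circ i_{A_X}=i_X\circ g_X$ and $f\circ i_{A_Y}=i_Y\circ g_Y$ by testing each of them against the internal coprojections $A_j \to A_X$ (resp.\ $A_j \to A_Y$). No extensivity is used anywhere; only the existence of finite coproducts, the decomposability hypothesis of this section, connectedness (Definition~\ref{def:conn}), and Lemma~\ref{lem:construction} enter.

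First I would record the regrouping. Since $J = J_X \sqcup J_Y$ is a disjoint union of finite index sets, associativity of coproducts yields
$$
A \cong \sum_{j\in J} A_j \cong \Bigl(\sum_{j\in J_X} A_j\Bigr) + \Bigl(\sum_{j\in J_Y} A_j\Bigr) = A_X + A_Y,
$$
and under this identification the outer coprojection $i_{A_X}:A_X \to A$ precomposed with the internal coprojection $A_j \to A_X$ (for $j\in J_X$) equals the original coprojection $i_{A_j}:A_j \to A$; symmetrically for $A_Y$. This is purely formal and needs no hypothesis beyond finite coproducts.

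Next, since $A_X = \sum_{j\in J_X} A_j$, the family $\{g_j:A_j \to X\}_{j\in J_X}$ induces a unique $g_X:A_X \to X$ restricting to $g_j$ on each summand, and $g_Y:A_Y \to Y$ is defined symmetrically. To prove $f\circ i_{A_X}=i_X\circ g_X$, by the universal property of $A_X$ it suffices to check equality after precomposing with each internal coprojection $A_j \to A_X$, $j\in J_X$. On the left the composite is $f\circ i_{A_j}=i_X\circ g_j$ by membership of $j$ in $J_X$ (Lemma~\ref{lem:construction}), while on the right the composite is $i_X\circ g_X$ restricted to $A_j$, which is $i_X\circ g_j$ by construction of $g_X$; these agree, so the relation holds. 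The relation for $A_Y$ is obtained verbatim with $X, g_X, i_X$ replaced by $Y, g_Y, i_Y$.

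I expect the computation to be entirely formal, so there is no genuine obstacle; the only point demanding care is the bookkeeping of the two layers of coprojections (the internal ones $A_j \to A_X$ and the outer one $A_X \to A$) and confirming that their composites reproduce the original coprojections $A_j \to A$. The disjointness $J_X \cap J_Y = \emptyset$, needed for the regrouping to be well defined, is part of Lemma~\ref{lem:construction}; it holds because each $A_j$ is connected, so the map of Definition~\ref{def:conn} is in particular injective and places $f\circ i_{A_j}$ on exactly one side.
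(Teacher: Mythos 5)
Your proof is correct and follows essentially the same route as the paper's: regroup the coproduct along the partition $J=J_X\coprod J_Y$, build $g_X$ and $g_Y$ from the $g_j$ by the universal property, and verify commutativity componentwise (the paper phrases this as ``taking the coproduct over $J_X$'' of the squares (\ref{eq:X-component})). Your extra remark justifying the disjointness of $J_X$ and $J_Y$ via injectivity in Definition~\ref{def:conn} is a correct and welcome piece of bookkeeping that the paper leaves implicit in Lemma~\ref{lem:construction}.
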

\begin{proof}
The claim $A\cong A_X+A_Y$ follows from $J=J_X\coprod J_Y$.
For commutativity, if $j\in J_X$, then 
\begin{equation}\label{eq:X-component}
  \begin{tikzcd}
   A_j\ar{r}{i_{A_j}}\ar{d}[swap]{g_j} & A\ar{d}{f} \\
   X\ar{r}[swap]{i_X} & X+Y    
  \end{tikzcd}
\end{equation}
commutes. Taking the coproduct over $J_X$,
we have the left commutative square of (\ref{eq:ext-to-prove}).
The commutativity at the right square follows similarly.
\end{proof}
We shall prove the first condition of Definition~\ref{def:extensive}.
\begin{proposition}\label{prop:left-pullback}
The left and right squares in (\ref{eq:ext-to-prove}) 
are pullbacks.
\end{proposition}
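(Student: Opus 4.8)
The plan is to verify the pullback property of the left square through the representability characterization of limits, and then to cut the verification down to \emph{connected} test objects. Concretely, the left square of (\ref{eq:ext-to-prove}) is a pullback precisely when, for every object $T$, the canonical map
$$
\Hom(T, A_X) \to \Hom(T,A) \times_{\Hom(T, X+Y)} \Hom(T,X), \quad u \mapsto (i_{A_X}\circ u,\, g_X\circ u),
$$
is a bijection, where the target is the set of compatible pairs $(p,q)$ with $f\circ p = i_X\circ q$. Writing an arbitrary $T$ as a finite coproduct $T \cong \sum_k T_k$ of connected objects and using $\Hom(T,-) \cong \prod_k \Hom(T_k,-)$, the target fibre product factors as the product over $k$ of the analogous fibre products for the $T_k$; since a product of bijections is a bijection, it suffices to treat connected $T$ (the case $T\cong 0$, the empty coproduct, being trivial as all four corners become singletons).

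So I would fix a connected $T$ and a compatible pair $p\colon T \to A$, $q\colon T \to X$ with $f\circ p = i_X\circ q$. Injectivity of the displayed map is immediate and needs no connectedness: $i_{A_X}$ is a coprojection, hence mono by Lemma~\ref{lem:copro-mono}, so any $u$ is determined by $i_{A_X}\circ u = p$. For surjectivity I would exploit $A \cong A_X + A_Y$ (Proposition~\ref{prop:AXAY}) together with connectedness of $T$: by Definition~\ref{def:conn}, $p$ factors through exactly one coprojection, i.e. either $p = i_{A_X}\circ p_X$ for some $p_X\colon T\to A_X$, or $p = i_{A_Y}\circ p_Y$ for some $p_Y\colon T\to A_Y$. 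In the first case I set $u := p_X$, so that $i_{A_X}\circ u = p$ holds by construction; and from $i_X\circ g_X\circ p_X = f\circ i_{A_X}\circ p_X = f\circ p = i_X\circ q$ combined with $i_X$ mono (again Lemma~\ref{lem:copro-mono}) I obtain $g_X\circ u = q$, which is exactly what surjectivity demands.

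The step I expect to be the crux is excluding the second case. There one computes $f\circ p = f\circ i_{A_Y}\circ p_Y = i_Y\circ g_Y\circ p_Y$, while also $f\circ p = i_X\circ q$, so a single morphism $T \to X+Y$ would be represented both by an element of $\Hom(T,X)$ and by an element of $\Hom(T,Y)$. In the extensive setting one would invoke Lemma~\ref{lem:hom-sum} to force $T\cong 0$, but that lemma is unavailable here, since extensivity is exactly what we are trying to prove. Instead I would argue directly from connectedness: the map $\Hom(T,X)\coprod \Hom(T,Y) \to \Hom(T,X+Y)$ of Definition~\ref{def:conn} is \emph{injective}, whereas $i_X\circ q$ and $i_Y\circ g_Y\circ p_Y$ are the images of elements living in different summands, so the equality $i_X\circ q = i_Y\circ g_Y\circ p_Y$ contradicts injectivity. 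Hence the second case never yields a compatible pullback datum, and every such pair arises from a unique $u$, establishing that the left square is a pullback. The right square follows by the symmetric argument, interchanging the roles of $X$ and $Y$ and of $A_X$ and $A_Y$.
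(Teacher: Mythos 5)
Your proof is correct and follows essentially the same route as the paper's: both reduce to a connected test object by decomposing it into connected summands, use monicity of coprojections (Lemma~\ref{lem:copro-mono}) for uniqueness and for deducing the second commuting triangle, and rule out the ``wrong summand'' case by the injectivity half of Definition~\ref{def:conn} applied to $\Hom(T,X)\coprod\Hom(T,Y)\to\Hom(T,X+Y)$. The only cosmetic differences are your representable/fibre-product phrasing of the pullback condition and factoring $p$ through the binary coproduct $A_X+A_Y$ rather than through a single connected summand $A_j$ as the paper does.
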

\begin{proof}
Take $D$, $h_1$, and $h_2$
in the following diagram with $fh_1=i_{X_1}h_2$, 
and we shall show the unique existence of $k$ that makes two triangles commute.
\begin{equation}\label{eq:D-pullback}
 \begin{tikzcd}
   D\ar[dotted]{rd}{k} \ar[bend left]{rrd}{h_1}\ar[bend right]{rdd}{h_2} & & \\ 
  & A_X\ar{r}{i_{A_X}}\ar{d}[swap]{g} & A\ar{d}{f} \\
  & X\ar{r}[swap]{i_X}  & X+Y.
 \end{tikzcd}
\end{equation}
By the same reason as in the proof of Lemma~\ref{lem:copro-mono},
we may assume that $D$ is connected. 
By connectivity of $D$ and $A=\sum_{j\in J}A_j$, 
there exists a unique $j\in J$ such that
\begin{equation}\label{eq:s}
h_1=i_{A_j}s, \ s:D \to A_j. 
\end{equation}
We claim that $j\in J_X$. 
By connectivity of $D$, we have a bijection
\begin{equation}
\Hom(D,X)\coprod \Hom(D,Y) \to \Hom(D,X+Y). 
\end{equation}
By $fh_1=i_X h_2$, $fh_1 \in \Hom(D,X+Y)$ lies in the image of 
$\Hom(D,X)$. Suppose that $j\in J_Y$. Then by 
Lemma~\ref{lem:construction}
$$
fh_1=fi_{A_j}s=i_Yg_js: D \stackrel{s}{\to} 
A_j \stackrel{g_j}{\to} Y \stackrel{i_Y}{\to} X+Y.
$$
This implies that $fh_1\in \Hom(D,X+Y)$ also comes from 
$\Hom(D,Y)$, which is absurd. Thus, $j\in J_X$.
Consequently, we have $g_j:A_j \to X$, which makes the diagram
(\ref{eq:X-component}) commute,
namely, 
$$
fi_{A_j}=i_Xg_j.
$$
We define $k:D \to A_X$ as a composition
$$
D \stackrel{s}{\to} A_j \stackrel{i_{A_j}}\longrightarrow A_X.
$$
By (\ref{eq:s}), this $k$ satisfies
$$
i_{A_X}k=i_{A_X}i_{A_j}s=i_{A_j}s=h_1, 
$$
which makes the upper triangle in (\ref{eq:D-pullback}) commute.
Since $i_{A_X}$ is mono by Lemma~\ref{lem:copro-mono} and 
Proposition~\ref{prop:AXAY}, such a $k$ is unique.
By the commutativity of the square in the diagram (\ref{eq:D-pullback}), 
$i_Xh_2=fh_1=fi_{A_X}k=i_Xgk$.
By Lemma~\ref{lem:copro-mono}, $i_X$ is mono and hence $h_2=gk$.
This shows that $A_X$ is the pullback. The same argument shows 
that $A_Y$ is the pullback.
\end{proof}
This proves the first condition of Definition~\ref{def:extensive}.
We shall prove the second condition (\ref{eq:extensive}). 
Assume that the both squares are pullbacks in (\ref{eq:extensive}).
By Proposition~\ref{prop:left-pullback}, 
we have
$A_{X_1}:=\sum_{j\in J_{X_1}} A_j$ is a pullback and hence
isomorphic to $A_1$, 
and similarly
$A_{X_2}:=\sum_{j\in J_{X_2}} A_j \cong A_2$.
Thus, 
$$
A_1\cong A_{X_1} \to A \leftarrow A_{X_2}\cong A_2
$$
is a coproduct.
Conversely, suppose that the top row of (\ref{eq:extensive})
is a coproduct. 
Let $A_1 \cong \sum_{j\in J}B_j$ with connected $B_j$,
and $A_2 \cong \sum_{k\in K}C_k$ with connected C$_k$,
with $J \cap K = \emptyset$. Then we have
$A \cong \sum_{j\in J}B_j+\sum_{k\in K}C_k$. 
The commutativity of the left square of (\ref{eq:extensive})
implies that $\sum_{j\in J}B_j$ is constructed from $A$
by the method described in Proposition~\ref{prop:AXAY},
hence $A_1$ is a pullback by Proposition~\ref{prop:left-pullback}. Similarly, $A_2$ is a pullback.
This proves that $\cC$ is extensive,
and completes the proof of (\ref{enum:main1}) in Theorem~\ref{th:main}.
\subsection{Injectivity preserved by Grothendieck groups}
This section proves Corollary~\ref{cor:main}.
We show the injectivity of (\ref{eq:grothen-inj}).
Under the condition of Theorem~\ref{th:main},
every object of $\cC$ is a finite coproduct of
connected objects in a unique way. This shows that
$(\Dec(\cC),+)$ is a free (commutative) monoid generated by the
representatives $C_i (i \in I)$ of the isomorphism classes of
connected objects, hence a (possibly infinite) direct sum
of copies of $\N$.
It is easy to see that its Grothendieck group
is the direct sum of copies of $\Z$, to which the free monoid
injects. Thus (\ref{eq:grothen-inj}) is injective.
Note that 
$\prod_{i\in I} \N$ injects to its Grothendieck group
$\prod_{i\in I} \Z$, hence
$$
(\Dec(\cC),+) \to \prod_{i\in I}(\Z,+)
$$
is injective.
The injectivity of (\ref{eq:hom-count-inj}) follows 
from the next general lemma.
\begin{lemma}
Let $M$ be a free commutative monoid
and $g:M \hookrightarrow G$ a monoid injection to a commutative group
$G$. By the universality, $g$ extends to a group homomorphism
from the Grothendieck group $(M,+,-)$ to $G$,
$$
f: (M,+,-) \to G.
$$
Then, $f$ is injective.
\end{lemma}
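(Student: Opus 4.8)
The plan is to use the explicit description of the Grothendieck group as formal differences and to exploit the injectivity of $g$ directly, so the argument reduces to one line of kernel-chasing. First I would fix notation: write $\iota \colon M \to (M,+,-)$ for the canonical monoid homomorphism into the Grothendieck group, so that the defining (universal) property of $f$ is precisely $f \circ \iota = g$. The one structural fact I would invoke is that every element of $(M,+,-)$ can be written as a single difference $\iota(a) - \iota(b)$ with $a,b \in M$; this is immediate from the standard construction of the Grothendieck group as pairs $(a,b)$ modulo the evident equivalence relation, and does not require freeness.

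Next I would compute the kernel of $f$. Suppose $\xi \in (M,+,-)$ satisfies $f(\xi) = 0$, and write $\xi = \iota(a) - \iota(b)$. Since $f$ is a group homomorphism and $f \circ \iota = g$, the equation $f(\xi)=0$ reads $g(a) = f(\iota(a)) = f(\iota(b)) = g(b)$. The injectivity of $g$ then forces $a = b$, whence $\iota(a) = \iota(b)$ and $\xi = 0$. Therefore $\ker f = 0$, i.e.\ $f$ is injective.

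I do not expect any step to present a genuine obstacle; the proof is short, and the only points deserving care are the bookkeeping that each element of $(M,+,-)$ is one difference $\iota(a)-\iota(b)$ rather than an alternating sum, and the observation that $f \circ \iota = g$ is the sole property of $f$ used. It is worth remarking that freeness of $M$ is not strictly needed for this argument—injectivity of $g$ alone suffices (and in fact already forces $\iota$ to be injective and $M$ to be cancellative). For the concrete case $M \cong \bigoplus_{i\in I} \N$ relevant to us, one may argue alternatively: an element $x$ of $(M,+,-) \cong \bigoplus_{i\in I}\Z$ decomposes as $x = x^{+} - x^{-}$ into its nonnegative and nonpositive coordinate parts, which are supported on disjoint index sets and hence lie in $M$; then $f(x)=0$ gives $g(x^{+}) = g(x^{-})$, injectivity of $g$ gives $x^{+} = x^{-}$, and disjointness of supports forces $x^{+} = x^{-} = 0$, so $x = 0$.
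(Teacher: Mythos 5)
Your proof is correct, and it takes a slightly but genuinely different route from the paper's. The paper exploits the freeness of $M$: it writes an element of the Grothendieck group in coordinates as $\sum_j a_j c_j$ over the free generators, splits it into the positive part $\sum_{a_j>0}a_jc_j$ and the negative part $\sum_{a_j<0}(-a_j)c_j$ (both of which lie in $M$), applies injectivity of $g$ to conclude these two are equal, and then uses disjointness of their supports to force both to vanish. Your argument instead uses only the generic presentation of the Grothendieck group: every element is a single difference $\iota(a)-\iota(b)$ with $a,b\in M$, and $f(\xi)=0$ gives $g(a)=g(b)$, hence $a=b$ and $\xi=\iota(a)-\iota(a)=0$. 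This is cleaner and strictly more general — as you correctly note, freeness is never used, and the lemma holds for any commutative monoid admitting an injection into a commutative group (which is automatically cancellative). The paper's version buys nothing extra here beyond staying concrete in the setting $M\cong\bigoplus_{i\in I}\N$ that it actually needs; indeed your closing ``alternative'' argument via positive and negative coordinate parts is essentially the paper's proof verbatim. The only bookkeeping point worth stating explicitly in your write-up is the universal property in the form $f\circ\iota=g$ together with the fact that every class in $(M,+,-)$ is represented by a pair $(a,b)$, both of which you do address.
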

\begin{proof}
Let $c_j (j\in J)$ be a free generator of the free monoid $M$,
and suppose that $f$ maps $\sum_{j\in J}a_jc_j\in (M,+,-)$ to $0\in G$ with $a_j\in \Z$.
This implies that $f(\sum_{a_j>0}a_jc_j - \sum_{a_j<0}(-a_j)c_j)=0$,
hence $g(\sum_{a_j>0}a_jc_j)=g(\sum_{a_j<0}(-a_j)c_j)$,
and by the injectivity of $g$, we have   
$\sum_{a_j>0}a_jc_j=\sum_{a_j<0}(-a_j)c_j$, hence the both sides
are zero because of the definition of free generators, 
and the injectivity of $f$ follows.
\end{proof}
It is well-known that taking the Grothendieck group of a semi-ring
gives a ring, and is a functor. Thus, the rest of Corollary~\ref{cor:main} follows.
\bibliographystyle{plain}
\bibliography{sfmt-kanren-cat}


\end{document}